\newcommand{\bs}{\boldsymbol}
\newcommand{\bu}{\bs{u}}
\newtheorem{theorem}{Theorem}
\newtheorem{proposition}{Proposition}
\newtheorem{definition}{Definition}
\newcounter{remark}
\def\theremark {\arabic{remark}}
\newenvironment{remark}{\refstepcounter{remark}\par\noindent{\bf Remark\ \theremark}\ }{\par}
\newtheorem{Proof}{Proof}
\newenvironment{proof}{\begin{Proof}\rm}{\hfill $\Box$ \end{Proof}}
\title{On Discrete Approximations to Infinite Horizon Differential Games
}
\author{ Javier de Frutos\thanks{Instituto de Investigaci\'on en Matem\'aticas (IMUVA), Universidad
de Valladolid, Spain.  (franciscojavier.frutos.baraja@uva.es)}
\and Victor Gat\'{o}n\thanks{Instituto de Investigaci\'on en Matem\'aticas (IMUVA), Universidad
de Valladolid, Spain. (victor.gaton@uva.es)}
  \and Julia Novo\thanks{Departamento de
Matem\'aticas, Universidad Aut\'onoma de Madrid, Spain. (julia.novo@uam.es)}}
\date{}
\begin{document}
\maketitle
\abstract{In this paper we study a discrete-time semidiscretization and a fully discretization (discrete-time, discrete-state)
of an infinite time horizon noncooperative $N$-player differential game. We prove that as either the discretization time step
or both time step and mesh size parameters approach zero the  discrete value function approximates the value function of the differential game. Furthermore, the discrete Nash equilibrium is an $\epsilon$-Nash equilibrium for the continuous-time differential game both in
the discrete-time and fully discrete cases.}

\section{Introduction}

 The theory of noncooperative differential games \cite{Basar}, \cite{Dockner}, \cite{Haurie_et_al}, \cite{Basar_et_al},  has become an indispensable tool in the applications to model problems in which the strategic interaction between several agents (or players) evolve over time. Among the several, non equivalent, concepts of equilibria in differential games that can be used to analyze a given model problem, we are concerned with Markovian Nash equilibria or state feedback Nash equilibria \cite{Basar_et_al}. We remark that feedback Nash equilibria have the property of being subgame perfect (strongly time consistent), see \cite{Basar_et_al}. Subgame perfectness is a property of prime importance in the applications that is not shared by other concepts of equilibrium as open-loop Nash equilibrium. It is worth noting that in  optimal control problems the optimal path can be represented by strategies either in open-loop form or in feedback form. On the contrary, when several decision-makers compete, each one faces an optimal control problem that depends on the actions of the rest of the players. Now, different information structures are not longer equivalent and, in particular, open-loop Nash equilibria are not subgame perfect \cite{Dockner}, \cite{Basar_et_al}.

  To look for a Markovian Nash equilibria each player has to solve an optimal control problem in which the strategies of his or her opponents are fixed. This leads to a system of $N$ coupled Hamilton-Jacobi-Bellman equations, $N>1$ being the number of players.
  {The combined high nonlinearity and dimensionality of Hamilton-Jacobi-Bellman equations preclude the knowledge of an analytical solution, except for some specific models with particular structure (linear-state or linear-quadratic models, for example).}
  Then we have to resort to numerical methods. In the one player case (optimal control), the numerical solution of Hamilton-Jacobi-Bellman has received considerable attention in the literature, see, among many others, the papers \cite{Falcone_Ferreti}, \cite{Falcone}, \cite{carlini}, \cite{Akian}, \cite{Guo}, \cite{Bo_et_al}.

   The objective of this paper is to show that an equilibrium of a differential game can be approximated by means of a semi-lagrangian discretization in time of the problem. Semi-lagrangian methods are well known numerical methods for optimal control problems, see for example \cite{Bardi}, \cite{Falcone}, \cite{Falcone_Ferreti}. Essentially, the method consists of a combination of time discretization of the dynamics with an approximation of the same order to the objective. This kind of methods have the nice property that once the discretization has been built up, the approximation scheme can be viewed as a discrete-time version of the continuous model. The approximation is constructed solving the Bellman equation for the discrete-time model.  The approach has been previously used, in the context of differential games, in \cite{DeFrutos2015}, \cite{DeFrutos2016}.

   In this paper we build on the results on \cite{Bardi} about the convergence of the discrete-time value function to the continuous-time value function to analyze the case of noncooperative $N$ player differential games. We prove that if the time step in the discretization is small, the discrete-time Nash equilibrium is an $\epsilon$-Nash of the differential game. Then, following \cite{Javier_yo}, we
   analyze the fully discrete (discrete-time, discrete-state) case for which we obtain analogous results to those of the discrete-time case. To this end, as in \cite{Javier_yo}, the analysis of the discrete-time discrete-state problem is based in the definition of an  auxiliary game using an appropriate interpolation on the state space.

    The rest of the paper is as follows. Section 2 is devoted to state the problem and some preliminaries including the notation to be used in the rest. In Section 3 we present the results of our analysis for the discrete time case. In Section 4
    we extend the results to the fully discrete case. Section 5 is devoted to show some numerical experiments. Finally, some concluding remarks  are presented in Section 6.

\section{Model problem and preliminaries}
We consider a $N$-player differential game with infinite time horizon.
 Player $i$'s objective, $i=1,\dots, N$, is to maximize with respect his or her own control $u_i$,
\begin{equation}
W_i(u_i,u_{-i},x_0):=\int_0^\infty f_i(x,u_i,u_{-i})e^{-\rho t} dt,\label{objective}
\end{equation}
subject to:
\begin{equation}
\dot{x}=g(x,u_i,u_{-i}),\quad x(0)=x_0.\label{dynamics}
\end{equation}

Functions $f_i:\mathbb{V}\times\mathbb{U}_1\dots\times\mathbb{U}_N\longrightarrow \mathbb{R}$, $i=1\dots, N$, and  $g:\mathbb{V}\times\mathbb{U}_1\dots\times\mathbb{U}_N\longrightarrow\mathbb{R}^n$ are given functions with
$\mathbb{V}\subset\mathbb{R}^n$ an open domain and $\mathbb{U}_i\subset\mathbb{R}^m$ a {compact}  set for $i=1,\dots, N$.
The parameter $\rho$ is a positive constant.
Here and in the rest of the paper, we are using, as it is usual, the  notation $u_{-i}$ to denote
$$u_{-i}=[u_1,\dots,u_{i-1},u_{i+1},\dots,u_N].$$
 With this notation, the evaluation of a given real function $H$ of $N$ variables in a pair $(u_i,u_{-i})$ is by convention
$$H(u_i,u_{-i})=H(u_1,\dots,u_i,\dots,u_N).$$

In the rest of this paper we will assume that functions $g$, and $f_i$, $i=1,\dots,N$, are continuous and satisfy the following assumptions:
\begin{enumerate}
\item[H1] There exists a constant $L_g$ such that for all $(x,u_1,\dots,u_N), (y,v_1,\dots,v_N)$ in $\mathbb{V}\times\mathbb{U}_1\dots\times\mathbb{U}_N$ \label{H1}
    $$
    |g(x,u_1,\dots,u_N)-g(y,v_1,\dots,v_N)|\le L_g\bigl(|x-y|+\sum_{j=1}^N|u_j-v_j|\bigr).$$
\item[H2] There exist constants $L_i$, $i=1,\dots,N$, such that
    $$
    |f_i(x,u_1,\dots,u_N)-f_i(y,v_1,\dots,v_N)|\le L_i\bigl(|x-y|+\sum_{j=1}^N|u_j-v_j|\bigr).
    $$
for all $(x,u_1,\dots,u_N), (y,v_1,\dots,v_N)$ in $\mathbb{V}\times\mathbb{U}_1\dots\times\mathbb{U}_N$ and $i=1,\dots, N$,
\item[H3] There exists a constant $M_f$ such that $(x,u_1,\dots,u_N)$ in $\mathbb{V}\times\mathbb{U}_1\dots\times\mathbb{U}_N$
$$|f_i(x,u_1,\dots,u_N)|\le M_f.$${
\item[H4] There exists a constant $M_g$ such that $(x,u_1,\dots,u_N)$ in $\mathbb{V}\times\mathbb{U}_1\dots\times\mathbb{U}_N$
$$|g(x,u_1,\dots,u_N)|\le M_g.$$}
\end{enumerate}

In this paper, we consider autonomous problems in infinite horizon and we are interested in stationary Markovian strategies, \cite{Basar}, \cite{Dockner}.
\begin{definition}\label{markovian_strategies}
Let $\mathcal{U}_i$  a set of measurable functions $\phi_i$ defined in $\mathbb{V}$ with values in  $\mathbb{U}_i\subset\mathbb{R}^m$. The set $\mathcal{U}=\mathcal{U}_1\times\dots\times\mathcal{U}_N$ is the set of admissible strategies if for every $(\phi_1,\dots,\phi_N)\in\mathcal{U}$ the state equation (\ref{dynamics}) with $u_i(t)=\phi_i(x(t))$, $i=1,\dots, N$, has, for every $x_0\in\mathbb{V}$, a unique absolutely continuous solution  $x(t)\in\mathbb{V}$ defined for all $t\ge 0$.
\end{definition}{
\begin{remark}Let us observe that the continuity of $g$ and the Lipschitz condition H1 guarantee the existence and uniqueness of the solution of the system \eqref{dynamics}.
We also assume for the domain $\mathbb{V}\subset {\Bbb R}^n$ and for the dynamics $g$ that the conditions of Definition \ref{markovian_strategies} hold so that the trajectory
$x(t)$ remains in $\mathbb{V}$. 
\end{remark}}

Given $(\psi_1\dots,\psi_N)\in\mathcal{U}$ we will use the notation
$W_{i}(\psi_i,\psi_{-i},x_0)=W_{i}({u}_i,{u}_{-i},x_0)$ with $u_{j}(t)=\psi_j(x(t))$, $j=1,\dots, N$, $t\ge 0$ and $x(t)$ defined by (\ref{dynamics}).
Let us note that, with this definition of admissible strategies,  if $(\phi_1,\dots,\phi_N)\in\mathcal{U}$ and $(\psi_1,\dots,\psi_N)\in\mathcal{U}$ are two $N$-tuples of admissible strategies, the strategy $(\psi_i,\phi_{-i})\in\mathcal{U}$ is also an admissible strategy for all $i=1,\dots,N$.

The relevant concept we are interested in is the concept of  Nash equilibrium.
\begin{definition}\label{value_function_definition}
A $N$-tuple of admissible stationary strategies $(\phi_1\dots,\phi_N)\in\mathcal{U}$ is a Markovian Nash Equilibrium (MNE) if for every $x\in\mathbb{V}$
\begin{equation}\label{nash}
W_i(\phi_i,\phi_{-i},x)\ge W_i(\psi_i,\phi_{-i},x),\quad i=1,\dots,N,
\end{equation}
for all $(\psi_1,\dots,\psi_N)\in\mathcal{U}$.

Given a MNE $(\phi_1,\dots,\phi_N)$ the value function for player $i$ is the function
$$V_i(x)=W_i(\phi_i,\phi_{-i},x),\quad x\in\mathbb{V}.$$
\end{definition}	
The following verification theorem can be found in \cite[Theorem 4.1]{Dockner}
\begin{theorem}\label{verification}
Let $(\phi_1,\dots,\phi_N)\in\mathcal{U}$ a $N$-tuple of admissible stationary strategies. Assume that { the functioms $V_i:\mathbb{V}\rightarrow\mathbb{R}$, $i=1,\dots,N$ are viscosity solutions of the Hamilton-Jacobi-Bellman equations, \cite{wagener}}
\begin{equation}\label{HJB}
\rho V_i(x)=\max_{u_i\in\mathbb{U}_i}\left\{f_i(x,u_i,\phi_{-i})+\nabla V_i(x)^Tg(x,u_i,\phi_{-i}))\right\}, \quad i=1,\dots,N,
\end{equation}
for all  $x\in\mathbb{V}$.
Assume also that either $V_i$ is bounded or $V_i$ is bounded below and the transversality condition
\begin{equation}\label{transversality}
\limsup_{T\rightarrow\infty}e^{-\rho T}V_i(x(T)) \le 0,
\end{equation}
where $x(t)$ is the solution of  \eqref{dynamics} with $u_i(t)=\phi_i(x(t))$, $i=1,\dots, N$, is satisfied.
If $\phi_i(x)$ is a maximizer of the right hand side of\eqref{HJB} for all $i=1,\dots,N$ and $x\in\mathbb{V}$, then $(\phi_1,\dots,\phi_N)$ is a Markovian Nash Equilibrium. Moreover, the function $V_i$ is the value function for player $i$, $i=1,\dots, N$.
\end{theorem}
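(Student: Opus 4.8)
The plan is to decouple the $N$-player claim into $N$ separate optimal-control verification arguments, exploiting that in the Nash inequality \eqref{nash} the opponents of player $i$ are held fixed at $\phi_{-i}$. Fix $i\in\{1,\dots,N\}$ and $x\in\mathbb{V}$, and let $\psi_i\in\mathcal{U}_i$ be an arbitrary admissible strategy for player $i$; as noted after Definition \ref{markovian_strategies} one has $(\psi_i,\phi_{-i})\in\mathcal{U}$, so the closed loop \eqref{dynamics} with $u_i=\psi_i(x(t))$ and $u_{-i}=\phi_{-i}(x(t))$ admits a unique absolutely continuous solution $x(\cdot)$ on $[0,\infty)$ with $x(0)=x$ and $x(t)\in\mathbb{V}$ for all $t$. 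The key computation is to differentiate $t\mapsto e^{-\rho t}V_i(x(t))$: since $V_i\in C^1$ is locally Lipschitz and $x(\cdot)$ is absolutely continuous with relatively compact range on bounded time intervals, this map is locally absolutely continuous, and its derivative equals, for a.e.\ $t$,
\[
e^{-\rho t}\left(-\rho V_i(x(t))+\nabla V_i(x(t))^Tg(x(t),\psi_i(x(t)),\phi_{-i}(x(t)))\right).
\]

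Next I would invoke the Hamilton--Jacobi--Bellman relation \eqref{HJB}: for every control value the maximum there dominates the value at $u_i=\psi_i(x(t))$, hence $-\rho V_i(x(t))+\nabla V_i(x(t))^Tg(x(t),\psi_i(x(t)),\phi_{-i}(x(t)))\le -f_i(x(t),\psi_i(x(t)),\phi_{-i}(x(t)))$. Multiplying by $e^{-\rho t}$, integrating on $[0,T]$ and rearranging yields
\[
V_i(x)\ge\int_0^Tf_i(x(t),\psi_i(x(t)),\phi_{-i}(x(t)))e^{-\rho t}\dt+e^{-\rho T}V_i(x(T)).
\]
The same computation along the trajectory $x^*(\cdot)$ generated by $(\phi_i,\phi_{-i})$ becomes an \emph{equality}, because $\phi_i(x)$ attains the maximum in \eqref{HJB}; thus
\[
V_i(x)=\int_0^Tf_i(x^*(t),\phi_i(x^*(t)),\phi_{-i}(x^*(t)))e^{-\rho t}\dt+e^{-\rho T}V_i(x^*(T)).
\]

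To conclude I would let $T\to\infty$. For the $\phi$-trajectory the boundary term tends to $0$: this is immediate if $V_i$ is bounded, and if $V_i$ is only bounded below it follows by combining $\liminf_{T\to\infty}e^{-\rho T}V_i(x^*(T))\ge0$ with the transversality hypothesis \eqref{transversality}; hence $V_i(x)=W_i(\phi_i,\phi_{-i},x)$, so $V_i$ is the value function of player $i$ in the sense of Definition \ref{value_function_definition}. Subtracting the inequality from this equality gives
\[
\int_0^T\!\!\left(f_i(x^*(t),\phi_i,\phi_{-i})-f_i(x(t),\psi_i,\phi_{-i})\right)e^{-\rho t}\dt\ge e^{-\rho T}V_i(x(T))-e^{-\rho T}V_i(x^*(T)),
\]
whose right-hand side has nonnegative $\liminf$ (again using $V_i$ bounded, or bounded below together with \eqref{transversality}), so player $i$ cannot improve upon $\phi_i$ against $\phi_{-i}$ in the catching-up sense; as $x$ and $\psi_i$ were arbitrary, this is exactly \eqref{nash}. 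The only delicate point is this final passage to the limit: one must track the sign of $e^{-\rho T}V_i(\cdot\,(T))$ under each of the two alternative hypotheses and observe that, because that term vanishes (or has the correct one-sided limit), $\limsup$ commutes with the comparison even when the objective integral \eqref{objective} itself need not converge. The remaining ingredients — absolute continuity of the composition, uniqueness of the closed-loop trajectories — are supplied by the admissibility built into Definition \ref{markovian_strategies} together with the $C^1$ regularity of the $V_i$.
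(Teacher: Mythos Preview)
The paper does not supply its own proof of this statement; it is quoted without proof from \cite[Theorem~4.1]{Dockner}. Your argument is the standard dynamic-programming verification proof --- differentiate $e^{-\rho t}V_i(x(t))$ along closed-loop trajectories, use the HJB relation as an inequality for an arbitrary deviation $\psi_i$ and as an equality for the maximizer $\phi_i$, then control the boundary term $e^{-\rho T}V_i(\cdot)$ via boundedness or bounded-below-plus-transversality --- and it is correct, including the subtraction step that yields the catching-up inequality. One cosmetic remark: the conclusion you reach is the catching-up version of the Nash inequality, which is what the theorem asserts, rather than literally \eqref{nash} (the latter presupposes convergence of the infinite-horizon integrals); you say this yourself, so just make the wording consistent.
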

{The Markovian Nash Equilibrium is in the sense of catching up optimality \cite{Carlson}}.
We remark that (\ref{HJB}) is a non-linear partial differential equation whose solution requires of some numerical approximation, except for some particular cases as linear-state or linear-quadratic problems, for example.
In the case of optimal control problems (only one player) one well developed approach is to combine a time discretization of (\ref{dynamics}) with a discretization of the same order of (\ref{objective}), see \cite{Bardi}, \cite{Falcone_Ferreti}, \cite{Falcone}, for example. For differential games (more than one interacting player) this approach has been used in \cite{DeFrutos2015}, \cite{DeFrutos2016}.
We consider now the most simple time-discrete version of the problem (\ref{objective})-(\ref{dynamics}). We consider a
discretization of the functional (\ref{objective}) by means of the rectangle rule combined with a forward Euler discretization of the dynamics (\ref{dynamics}).

Let $h>0$ be a positive parameter and let $t_n =nh$ be the
discrete times defined for all positive integers $n$. We denote by $\beta_h$ the discrete discount factor defined by $\beta_h=1-\rho h$. We consider the discrete-time infinite horizon game in which player $i$ aims to maximize

\begin{equation}
W_{i,h}(\bs{u}_i,\bs{u}_{-i},x_0):=h\sum_{n=0}^\infty \beta_h^n f_i(x_n,u_{i,n},u_{-i,n}), \label{objective_discrete}
\end{equation}
subject to
\begin{equation}\label{dynamics_discrete}
x_{n+1}=x_n+hg(x_n,u_{i,n},u_{-i,n}),
\end{equation}
where
$$
\bs{u}_j=\left\{u_{j,0},u_{j,1},\ldots\right\},\quad u_{j,n}\in\mathbb{U}_j,\quad n\ge 0,\quad j=1,\dots, N,
$$
and $x_0\in\mathbb{V}$ is a given initial state.

We are interested in stationary Markovian Strategies, see \cite{Haurie_et_al}, \cite{Krawczyk_Petkrov} for a study of the discrete-time  case. We assume, for simplicity, that for every $x_0\in \mathbb{V}$ and $(\psi_1,\dots,\psi_N)\in\mathcal{U}$,
the recursion (\ref{dynamics_discrete}) with $u_{j,n}=\psi_j(x_n)$, $j=1,\dots, N$, $n\ge 0$, is well defined and $x_n\in\mathbb{V}$ for all $n\ge 0$. In other words, we assume that $\mathcal{U}$ is also the set of admissible strategies of the  discrete-time game (\ref{objective_discrete}), (\ref{dynamics_discrete}). Let $(\psi_1,\dots,\psi_N)\in\mathcal{U}$. We will use the notation
$W_{i,h}(\psi_i,\psi_{-i},x_0)=W_{i,h}(\bs{u}_i,\bs{u}_{-i},x_0)$ with $u_{j,n}=\psi_j(x_n)$, $j=1,\dots, N$, $n\ge 0$ and $x_n$ defined by the recursion (\ref{dynamics_discrete}).

The definitions of Markovian Nash Equilibrium and player $i$ value function are similar to that of the continuous-time dynamic game.

\begin{definition}\label{value_function_definition_time_discrete}
A $N$-tuple of admissible stationary strategies $(\phi_1^h\dots,\phi_N^h)\in\mathcal{U}$ is a Markovian Nash Equilibrium (MNE) for the discrete-time game \eqref{objective_discrete}-\eqref{dynamics_discrete} if for every $x\in\mathbb{V}$
\begin{equation}\label{nash_time_discrete}
W_{i,h}(\phi_i^h,\phi_{-i}^h,x)\ge W_{i,h}(\psi_i,\phi_{-i}^h,x),\quad i=1,\dots,N.
\end{equation}
for all $(\psi_1,\dots,\psi_N)\in\mathcal{U}$.

Given a MNE for the discrete-time game $(\phi_1^h,\dots,\phi_N^h)$ the value function for player $i$ is the function
$$V_{i,h}(x)=W_{i,h}(\phi_i^h,\phi_{-i}^h,x),\quad x\in\mathbb{V}.$$
\end{definition}
The following is a verification theorem similar to (\ref{verification}), see \cite{Krawczyk_Petkrov}, \cite{Haurie_et_al}.

\begin{theorem}\label{verification_time_discrete}
Let $(\phi_1^h,\dots,\phi_N^h)\in\mathcal{U}$ a $N$-tuple of admissible stationary strategies. Assume that there exist continuous functions $V_{i,h}:\mathbb{V}\rightarrow\mathbb{R}$, $i=1,\dots,N$, such that the Bellman equations
\begin{equation}\label{Bellman}
 V_{i,h}(x)=\max_{u_i\in\mathbb{U}_i}\left\{hf_i(x,u_i,\phi_{-i}^h)+\beta_h V_{i,h}(x+hg(x,u_i,\phi_{-i}^h))\right\}, \quad i=1,\dots,N,
\end{equation}
are satisfied for all  $x\in\mathbb{V}$.
Assume also that either $V_{i,h}$ is bounded or $V_{i,h}$ is bounded below and the transversality condition
\begin{equation}\label{discrete_time_transversality}
\limsup_{n\rightarrow\infty}\beta_h^nV_{i,h}(x_n) \le 0,
\end{equation}
where $\{x_n\}_{n=0}^\infty$ is the solution of  \eqref{dynamics_discrete} with $u_{i,n}=\phi_i^h(x_n)$, $i=1,\dots, N$, is satisfied.
If $\phi_i^h(x)$ is a maximizer of the right hand side of\eqref{Bellman} for all $i=1,\dots,N$ and $x\in\mathbb{V}$, then $(\phi_1^h,\dots,\phi_N^h)$ is a Markovian Nash Equilibrium for the discrete game. Moreover, the function $V_{i,h}$ is the value function for player $i$, $i=1,\dots, N$.
\end{theorem}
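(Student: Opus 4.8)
The plan is to read the statement as an essentially single-player dynamic-programming verification result: once the opponents' strategies $\phi_{-i}^h$ are frozen, player $i$ faces the discrete-time optimal control problem of maximizing $W_{i,h}(\psi_i,\phi_{-i}^h,x)$ over admissible $\psi_i$, and \eqref{Bellman} is precisely the Bellman equation of that control problem. I would therefore reduce everything to proving, for each fixed $i$ and each $x\in\mathbb{V}$, the two assertions
\[
W_{i,h}(\psi_i,\phi_{-i}^h,x)\le V_{i,h}(x)\ \text{ for every admissible }\psi_i,\qquad W_{i,h}(\phi_i^h,\phi_{-i}^h,x)=V_{i,h}(x),
\]
because together they give \eqref{nash_time_discrete} and identify $V_{i,h}$ with the value function of Definition \ref{value_function_definition_time_discrete}. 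It is convenient to record at the outset that $\beta_h=1-\rho h\in[0,1)$ (for $h$ small enough), which is what makes the tail estimates below go through.

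For the sub-optimality inequality I would fix an admissible $\psi_i$, let $\{x_n\}_{n\ge 0}$ be the trajectory generated by \eqref{dynamics_discrete} with $u_{i,n}=\psi_i(x_n)$, $u_{-i,n}=\phi_{-i}^h(x_n)$ and $x_0=x$, and evaluate \eqref{Bellman} at the state $x_n$. Since the maximum there dominates the value at the particular control $u_i=\psi_i(x_n)$, one gets, for every $n$,
\[
V_{i,h}(x_n)\ \ge\ h\,f_i\bigl(x_n,\psi_i(x_n),\phi_{-i}^h(x_n)\bigr)+\beta_h\,V_{i,h}(x_{n+1}).
\]
Multiplying this by $\beta_h^{\,n}\ge 0$, summing over $n=0,\dots,M-1$ and telescoping the terms $\beta_h^{\,n}V_{i,h}(x_n)-\beta_h^{\,n+1}V_{i,h}(x_{n+1})$ yields
\[
V_{i,h}(x)\ \ge\ h\sum_{n=0}^{M-1}\beta_h^{\,n}f_i\bigl(x_n,\psi_i(x_n),\phi_{-i}^h(x_n)\bigr)+\beta_h^{\,M}V_{i,h}(x_M).
\]
I would then let $M\to\infty$: since $0\le\beta_h<1$, the tail $\beta_h^{\,M}V_{i,h}(x_M)$ tends to $0$ if $V_{i,h}$ is bounded, and satisfies $\liminf_{M}\beta_h^{\,M}V_{i,h}(x_M)\ge 0$ if $V_{i,h}$ is only bounded below (it is then $\ge\beta_h^{\,M}c$ for a constant lower bound $c$, and $\beta_h^{\,M}c\to 0$). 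In either case $V_{i,h}(x)\ge W_{i,h}(\psi_i,\phi_{-i}^h,x)$, the partial sums being read in the catching-up sense if the series fails to converge.

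For the optimality equality I would repeat the computation along the equilibrium trajectory $\{x_n\}$ produced by \eqref{dynamics_discrete} with $u_{i,n}=\phi_i^h(x_n)$. Because $\phi_i^h(x_n)$ is a maximizer of the right-hand side of \eqref{Bellman} at each state $x_n$, every inequality above becomes an equality, so $V_{i,h}(x)=h\sum_{n=0}^{M-1}\beta_h^{\,n}f_i\bigl(x_n,\phi_i^h(x_n),\phi_{-i}^h(x_n)\bigr)+\beta_h^{\,M}V_{i,h}(x_M)$ for every $M$. If $V_{i,h}$ is bounded the tail vanishes as $M\to\infty$; if $V_{i,h}$ is only bounded below, the transversality condition \eqref{discrete_time_transversality} gives $\limsup_{M}\beta_h^{\,M}V_{i,h}(x_M)\le 0$ while the lower bound gives $\liminf_{M}\beta_h^{\,M}V_{i,h}(x_M)\ge 0$, so again the tail $\to 0$; hence the partial sums converge and $V_{i,h}(x)=W_{i,h}(\phi_i^h,\phi_{-i}^h,x)$. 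Combining the two parts gives $W_{i,h}(\phi_i^h,\phi_{-i}^h,x)=V_{i,h}(x)\ge W_{i,h}(\psi_i,\phi_{-i}^h,x)$ for all admissible $\psi_i$ and all $x\in\mathbb{V}$, which is exactly \eqref{nash_time_discrete}, and simultaneously shows that $V_{i,h}$ is the value function of player $i$.

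The argument is the discrete counterpart of the proof of the continuous-time verification Theorem \ref{verification}, with the telescoping of $\beta_h^{\,n}V_{i,h}(x_n)$ playing the role of the integration of $\tfrac{d}{dt}\!\left(e^{-\rho t}V_i(x(t))\right)$. I expect the only genuinely delicate step to be the passage to the limit in the tail term $\beta_h^{\,M}V_{i,h}(x_M)$: this is where the dichotomy "bounded" versus "bounded below plus transversality" is needed, and where one must also make sure the series in \eqref{objective_discrete} is meaningful (or reinterpret it in the catching-up sense) along the trajectories involved. The game-theoretic layer costs nothing extra: freezing $\phi_{-i}^h$ decouples the $N$ Bellman equations into $N$ independent single-player problems, so no fixed-point or coupling argument is required.
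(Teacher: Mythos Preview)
The paper does not actually supply a proof of this theorem: it is stated with a pointer to \cite{Krawczyk_Petkrov} and \cite{Haurie_et_al}, and the text moves directly to Section~3. Your argument is the standard dynamic-programming verification proof and is correct; in particular, freezing $\phi_{-i}^h$ and telescoping $\beta_h^{\,n}V_{i,h}(x_n)$ along an arbitrary trajectory for the upper bound, then along the equilibrium trajectory for equality, is exactly the intended route, and your handling of the tail term under the ``bounded'' versus ``bounded below plus transversality'' alternatives is right. One cosmetic point worth flagging in your write-up: the Bellman equation \eqref{Bellman} as printed omits the factor $h$ in front of $f_i$ (compare with the definition of $W_{i,h}$ in \eqref{objective_discrete}); you silently restore it when you write $V_{i,h}(x_n)\ge h\,f_i(\cdots)+\beta_h V_{i,h}(x_{n+1})$, which is the correct form, but it would be cleaner to note the typo explicitly.
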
{
\begin{remark}\label{remark2}
We observe that to compute the solution of \eqref{nash_time_discrete}, or equivalently the solution of \eqref{Bellman}, one fixes all the strategies except for one to get the maximum.
This maximum exists for $h$ small enough, being the fixed point of a contractive operator, see \cite{Capuzzo_paper}.
\end{remark}}
\section{Discrete-time approximation analysis}

The following proposition is a consistency result that extends \cite[Chapter 6, Lemma 1.2]{Bardi} to the case of a number of players $N>1$. We include the proof for the reader's convenience.
\begin{proposition}\label{consistency}
Let  $(\phi_1,\dots,\phi_N)\in\mathcal{U}$ an arbitrary $N$-tuple of admissible strategies. Let us assume that there exists a constant $L_s>0$ with
\begin{equation}\label{lipschitz_strategies}
|\phi_i(x_1)-\phi_i(x_2)|\le L_s |x_1-x_2|,\quad i=1,\dots, N.
\end{equation}
Let us assume that hypotheses $H_1$, $H_2$ and $H_3$ are satisfied.
 Then
\begin{equation*}
\lim_{h\rightarrow 0} \left|W_{i,h}(\phi_{i},\phi_{-i},x)- W_i(\phi_{i},\phi_{-i},x)\right|=0,
\quad i=1,\dots,N.
\end{equation*}
\end{proposition}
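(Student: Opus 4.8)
The plan is to fix $\epsilon>0$, truncate both the integral \eqref{objective} and the series \eqref{objective_discrete} at a common finite horizon $T$, bound the two tails uniformly in $h$ by means of hypothesis $H_3$, and then, for that fixed $T$, let $h\to0$ in the remaining finite sums. Throughout, write $\tilde g(x):=g(x,\phi_1(x),\dots,\phi_N(x))$ and $\tilde f_i(x):=f_i(x,\phi_1(x),\dots,\phi_N(x))$; by $H_1$, $H_2$ and \eqref{lipschitz_strategies} the maps $\tilde g$ and $\tilde f_i$ are Lipschitz on $\mathbb V$, with constants $L_g(1+NL_s)$ and $L_i(1+NL_s)$ respectively, and $|\tilde f_i|\le M$ by $H_3$. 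Let $x(t)$ be the solution of $\dot x=\tilde g(x)$, $x(0)=x$, and $\{x_n\}_{n\ge0}$ the forward Euler iterates $x_{n+1}=x_n+h\tilde g(x_n)$, $x_0=x$; both remain in $\mathbb V$ by the admissibility assumption on $\mathcal U$.

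\emph{Tails.} From $|\tilde f_i|\le M$ one gets $\bigl|\int_T^\infty\tilde f_i(x(t))e^{-\rho t}\dt\bigr|\le(M/\rho)e^{-\rho T}$, while for $0<h<1/\rho$ (so that $0<\beta_h<1$), using $1-\rho h\le e^{-\rho h}$ and summing the geometric series, $\bigl|h\sum_{n\ge N_h}\beta_h^n\tilde f_i(x_n)\bigr|\le C\,e^{-\rho T}$ with $N_h:=\lfloor T/h\rfloor$ and $C$ independent of $h$. Hence $T=T(\epsilon)$ can be fixed so that both tails lie below $\epsilon/3$ for all small enough $h$.

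\emph{Finite part.} With $T$ fixed I would compare $h\sum_{n=0}^{N_h-1}\beta_h^n\tilde f_i(x_n)$ and $\int_0^T\tilde f_i(x(t))e^{-\rho t}\dt$ via three estimates, each $O(h)$ with a constant depending on $T$: (i) convergence of forward Euler, $\max_{0\le n\le N_h}|x_n-x(t_n)|\le C(T)h$, obtained by feeding $|\tilde g(x_n)-\tilde g(x(t_n))|\le L_g(1+NL_s)|x_n-x(t_n)|$ together with the bound $\sup_{[0,T]}|\tilde g(x(t))|<\infty$ (finite since $\tilde g\circ x$ is continuous on $[0,T]$) into the discrete Gronwall inequality for $e_n:=x_n-x(t_n)$; combined with the Lipschitz bound on $\tilde f_i$ this gives $|\tilde f_i(x_n)-\tilde f_i(x(t_n))|\le C(T)h$; (ii) $|\beta_h^n-e^{-\rho t_n}|\le C(T)h$ for $0\le n\le N_h$, from $|n\ln(1-\rho h)+\rho t_n|=n\,O(h^2)=O(h)$ on that range; (iii) the left-endpoint rectangle-rule error $\bigl|h\sum_{n=0}^{N_h-1}\tilde f_i(x(t_n))e^{-\rho t_n}-\int_0^T\tilde f_i(x(t))e^{-\rho t}\dt\bigr|\le C(T)h$, valid since $t\mapsto\tilde f_i(x(t))e^{-\rho t}$ is Lipschitz on $[0,T]$ (composition of Lipschitz maps, $\dot x=\tilde g(x)$ being bounded there). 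Combining (i)--(iii) bounds the difference of the truncated quantities by $C(T)h$, hence below $\epsilon/3$ for $h$ small, and the triangle inequality closes the argument.

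The point needing the most care — though not a real obstacle — is the order of limits: the constants $C(T)$ in (i)--(iii) grow with $T$ (the Euler one like $e^{L_g(1+NL_s)T}$), so $T$ must be fixed from the uniform tail bound before $h\to0$ is taken. The passage from $N=1$ to general $N$ costs nothing essential: the opponents' feedbacks $\phi_{-i}$ are merely frozen Lipschitz functions of the state, so relative to \cite[Chapter 6, Lemma 1.2]{Bardi} one only replaces $L_g$, $L_i$ by $L_g(1+NL_s)$, $L_i(1+NL_s)$.
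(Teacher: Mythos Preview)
Your argument is correct and follows essentially the same strategy as the paper: truncate at a finite horizon, bound both tails uniformly in $h$ via $H_3$, and then control the finite-horizon discrepancy through the Euler error, the discount-factor mismatch, and the quadrature error. The only cosmetic difference is that the paper rewrites the discrete sum as an integral of the piecewise-constant interpolant $\tilde y(t)$ of the Euler iterates before comparing, whereas you compare at the grid points and then invoke the rectangle rule; the resulting estimates and the order-of-limits caveat you flag are the same.
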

\begin{proof} The first part of the proof uses a well known argument from the theory of the numerical solution of ordinary differential equations, see \cite[Chapter 6, Lemma 1.2]{Bardi} .

Let $(\phi_1,\dots,\phi_N)\in\mathcal{U}$ be a fixed $N$-tuple of admissible strategies.
Let $y(t)$ be the solution of (\ref{dynamics}) with $y(0)=x$,
and $y_n$, $n=0,1,\dots$ the solution of (\ref{dynamics_discrete}) with $y_0=x$. Let us define the piecewise constant function
$$\tilde{y}(t)=y_n,\quad t\in [t_n, t_{n+1}), \quad n\ge 0,$$
with $t_n=nh$, $n=0,1,\dots$.

Let us note that $\phi_i(\tilde{y}(t))$ is a piecewise constant strategy with $\phi_i(\tilde{y}(t))=\phi_i(y_n)$ for $t\in [t_n, t_{n+1})$.

It is easy to see that $\tilde{y}$ can be expressed as
$$
\tilde{y}(t)=x+\int_0^{t_n}g(\tilde{y}(s),\phi_i(\tilde{y}(s)),\phi_{-i}(\tilde{y}(s))\,ds,
\quad \forall t\in[t_n, t_{n+1}).
$$
Using that $y(t)$ satisfies
$$
{y}(t)=x+\int_0^{t}g({y}(s),\phi_i(y(s)),\phi_{-i}(y(s))\, ds, \quad  \forall t\ge 0,
$$
we have that, for $t\in[t_n,t_{n+1})$,
\begin{align}\label{eq_prime_pro}
y(t)-\tilde y(t)&= \int_0^{t_n}g(y(s),\phi_i(y(s)),\phi_{-i}(y(s)))
-g(\tilde y(s),\phi_i(\tilde{y}(s)),\phi_{-i}(\tilde{y}(s))\, ds\nonumber\\
&\quad+\int_{t_n}^t g(y(s),\phi_i(y(s)),\phi_{-i}(y(s))) \, ds.
\end{align}

 We use now hypothesis H1, (\ref{lipschitz_strategies})  and the fact that H1 implies that there exists a constant
 $K\ge 0$ with
\begin{equation}\label{bound_dynamics}
|g(y,u_{i},u_{-i})|\le K\big(1+|y|\bigr),
\end{equation}
to get
\begin{equation*}
|y(t)-\tilde y(t)|\le  L\int_0^t |y(s)-\tilde y(s)|~ds+K\int_{[t/h]h}^t(1+|y(s)|) \, ds,
\end{equation*}
where $ L=L_g(1+NL_s)$.

It is easy to prove (see \cite[Chapter 3, Theorem 5.5]{Bardi}) that, thanks to (\ref{bound_dynamics})
\begin{equation}\label{eq:cota_sol_or}
|y(t)|\le (|x|+\sqrt{2Kt})e^{K t},\quad t>0.
\end{equation}
And then
\begin{equation*}
|y(t)-\tilde y(t)|\le   L\int_0^t |y(s)-\tilde y(s)|~ds+K h\left(1+(|x|+\sqrt{2Kt})e^{Kt}\right).
\end{equation*}
Hence, by Gronwall's Lemma
\begin{equation}\label{estimacion_dynamics}
|y(t)-\tilde y(t)|\le K h\left(1+(|x|+\sqrt{2Kt})e^{Kt}\right)e^{Lt}.
\end{equation}

Let $J$ be a positive integer. Let us write
\begin{equation*}
\begin{split}
W_{i,h}(\phi_{i},\phi_{-i},x) &=
h\sum_{n=0}^{J-1}\beta_h^nf_i(y_n,\phi_i(y_n),\phi_{-i}(y_n))
+h\sum_{n=J}^\infty\beta_h^nf_i(y_n,\phi_i(y_n),\phi_{-i}(y_n))\\
&=\int_0^{t_J}\beta_h^{[s/h]h}f_i(\tilde{y}(s),\phi_i(\tilde{y}(s)),\phi_{-i}(\tilde{y}(s)))\, ds\\
&\qquad+h\sum_{n=J}^\infty\beta_h^nf_i(y_n,\phi_i(y_n),\phi_{-i}(y_n)).
\end{split}
\end{equation*}

So that

\begin{equation}\label{estimation_functional}
\begin{split}
W_{i,h}(\phi_{i},\phi_{-i}&,x)-W_{i}(\phi_{i},\phi_{-i},x)=\\
&\int_0^{t_J}e^{-\rho s}\bigl(f_i(\tilde{y}(s),\phi_i(\tilde{y}(s)),\phi_{-i}(\tilde{y}(s)))
-f_i({y}(s),\phi_i({y}(s)),\phi_{-i}({y}(s))) \bigr )\, ds\\
&+\int_0^{t_J}(\beta_h^{[s/h]}-e^{-\rho s})f_i(\tilde{y}(s),\phi_i(\tilde{y}(s)),\phi_{-i}(\tilde{y}(s)))\, ds\\
&+h\sum_{n=J}^\infty\beta_h^nf_i(y_n,\phi_i(y_n),\phi_{-i}(y_n))\\
&+\int_{t_J}^\infty e^{-\rho s}f_i({y}(s),\phi_i({y}(s)),\phi_{-i}({y}(s))) \, ds.
\end{split}
\end{equation}
We bound separately each of the four terms. Using now H2, (\ref{lipschitz_strategies}) and (\ref{estimacion_dynamics}),  we have
\begin{equation*}
\Bigl |\int_0^{t_J}e^{-\rho t}\Bigl(f_i(\tilde{y}(s),\phi_i(\tilde{y}(s)),\phi_{-i}(\tilde{y}(s)))
-f_i({y}(s),\phi_i({y}(s)),\phi_{-i}({y}(s))) \Bigr )\, ds\Bigr |\le Ch,
\end{equation*}
with $C=KL_i(1+NL_s))\int_0^{t_J} \bigl(1+(|x|+\sqrt{2Ks})e^{Ks}\bigr)e^{Ls}e^{-\rho s} \, ds.$
The second term can be estimated using H3 and the mean value theorem as follows
\begin{equation*}
\begin{split}
\bigl
|\int_0^{t_J}(\beta_h^{[t/h]}-e^{-\rho t})f_i(\tilde{y}(s),\phi_i(\tilde{y}(s)),\phi_{-i}(\tilde{y}(s)))\, ds\bigr|&\le
Mt_J\max_{0\le t\le t_J} |\beta_h^{[t/h]}-e^{-\rho t}|\\
&\le M\rho t_J((\theta_h-1)t_J+\theta_hh),
\end{split}
\end{equation*}
with $\theta_h=-\log(1-\rho h)/(\rho h)$. Note that $\theta_h\rightarrow 1$ as $h\rightarrow 0$.
Third and four terms in (\ref{estimation_functional}) are bounded using hypothesis H3. We have
$$
h\bigl |\sum_{n=J}^\infty\beta_h^nf_i(y_n,\phi_i(y_n),\phi_{-i}(y_n))\bigr|\le
M\frac{\beta_h^J}{1-\beta_h}h
$$
and
$$\bigl |\int_{t_J}^\infty e^{-\rho t}f_i({y}(s),\phi_i({y}(s)),\phi_{-i}({y}(s)))\, ds\bigr|\le
\frac{M}{\rho}e^{-\rho t_J}.
$$
The proof finishes observing that each of the four terms can be made arbitrary small taking  $J$ big enough and $h$ small enough. \end{proof}

The following proposition is a refinement of the Proposition \ref{consistency} requiring stronger hypotheses on the problem data {\cite[Chapter 6 Lemma 1.2]{Bardi}}.

\begin{proposition}\label{consitency_with_order}
Let  $(\phi_1,\dots,\phi_N)\in\mathcal{U}$ an arbitrary $N$-tuple of {(time-continuous)} admissible strategies and let $x\in\mathbb{V}$. Let assume that hypotheses $H1$, $H2$, $H3$ and \eqref{lipschitz_strategies} hold.
Let us assume that either {$\rho> L$} with $L=L_g(1+LN_s)$ and $H4$ holds or {$\rho>L+K$} with $K$ the constant in \eqref{bound_dynamics}.  Then, there exists a
positive constant $C$ and $h_0>0$ such that for all $h\le h_0$ {with $h_0<1/\rho$}
\begin{equation*}
\bigl|W_{i,h}(\phi_i,\phi_{-i},x)-W_i(\phi_i,\phi_{-i},x)\bigr |\le C h.
\end{equation*}
\end{proposition}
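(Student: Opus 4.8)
The plan is to reuse the decomposition appearing in the proof of Proposition~\ref{consistency}, but to keep the infinite sums and integrals instead of truncating at an index $J$, since the stronger hypotheses on $\rho$ make the relevant tails summable on their own. As there, along the fixed tuple $(\phi_1,\dots,\phi_N)$ the closed-loop dynamics $\dot y=g(y,\phi_i(y),\phi_{-i}(y))$ has a right-hand side that is globally Lipschitz with constant $L=L_g(1+NL_s)$ (by $H1$ and \eqref{lipschitz_strategies}) and of linear growth with constant $K$ (by \eqref{bound_dynamics}), while $s\mapsto f_i(y(s),\phi_i(y(s)),\phi_{-i}(y(s)))$ is bounded by $M$ ($H3$) and Lipschitz along trajectories (by $H2$ and \eqref{lipschitz_strategies}). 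Using the identity $W_{i,h}(\phi_i,\phi_{-i},x)=\int_0^\infty\beta_h^{[s/h]}f_i(\tilde y(s),\phi_i(\tilde y(s)),\phi_{-i}(\tilde y(s)))\ds$ already exploited in the proof of Proposition~\ref{consistency}, and subtracting the integral form of $W_i$, one arrives at
\begin{equation*}
W_{i,h}-W_i=\int_0^\infty\beta_h^{[s/h]}\bigl(f_i(\tilde y(s),\cdot)-f_i(y(s),\cdot)\bigr)\ds+\int_0^\infty\bigl(\beta_h^{[s/h]}-e^{-\rho s}\bigr)f_i(y(s),\cdot)\ds,
\end{equation*}
where the dotted arguments stand for $\phi_i,\phi_{-i}$ evaluated at the state displayed.

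For the first integral I would bound the integrand by $(L_i+NL_s)\beta_h^{[s/h]}|\tilde y(s)-y(s)|$ using $H2$ and \eqref{lipschitz_strategies}, together with $\beta_h^{[s/h]}\le e^{\rho h}e^{-\rho s}$ (valid for $h<1/\rho$, since $1-\rho h\le e^{-\rho h}$ and $[s/h]\ge s/h-1$). In the case $\rho>L+K$, estimate \eqref{estimacion_dynamics} gives $|\tilde y(s)-y(s)|\le Kh\bigl(1+(|x|+\sqrt{2Ks})e^{Ks}\bigr)e^{Ls}$, and $\int_0^\infty\bigl(1+(|x|+\sqrt{2Ks})e^{Ks}\bigr)e^{Ls}e^{-\rho s}\ds<\infty$ precisely because $L+K<\rho$; this produces a bound $Ch$ with $C$ depending only on $|x|$, $\rho$, $M$, $L$, $K$ and the Lipschitz constants. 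In the alternative case ($g$ bounded, $|g|\le C_g$, and $\rho>L$) I would instead rederive the sharper one-step estimate $|\tilde y(s)-y(s)|\le C_gh\,e^{Ls}$ — the local truncation error of a single forward Euler step is now $O(h)$ uniformly in $s$, and Gronwall closes the bound — after which $\int_0^\infty e^{Ls}e^{-\rho s}\ds<\infty$ because $L<\rho$, again giving $Ch$.

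For the second integral, $H3$ bounds it by $M\int_0^\infty|\beta_h^{[s/h]}-e^{-\rho s}|\ds$, so it remains to show this integral is $O(h)$. Writing $\beta_h^n=e^{-n\rho h\theta_h}$ with $\theta_h=-\log(1-\rho h)/(\rho h)$, a Taylor expansion of $\log(1-\rho h)$ gives $\theta_h-1=O(h)$. I would split $\beta_h^{[s/h]}-e^{-\rho s}$ into $e^{-[s/h]\rho h\theta_h}-e^{-[s/h]\rho h}$ and $e^{-[s/h]\rho h}-e^{-\rho s}$, estimate each using $1-e^{-a}\le a$, and reduce the claim to the elementary facts $\sum_{n\ge0}(nh)e^{-\rho nh}=O(1/h)$ and $\sum_{n\ge0}e^{-\rho nh}=O(1/h)$; the first piece then contributes $\rho h(\theta_h-1)\cdot O(1/h)=O(h)$ and the second piece contributes $O(h^2)\cdot O(1/h)=O(h)$, so the whole integral is $O(h)$.

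The only genuine obstacle is the infinite-horizon integrability, and that is exactly what the two alternative assumptions on $\rho$ are designed to supply. Once one checks that all the constants generated above depend on the problem data and on $|x|$ but not on $h$ — which forces a threshold $h_0$: take $h_0<1/\rho$ so that $0<\beta_h<1$, and small enough that, say, $\theta_h\le2$ and $e^{\rho h}\le2$ — summing the two $Ch$ bounds completes the proof. More briefly, one may instead invoke \cite[Chapter~3, Theorem~5.5]{Bardi} for the one-player control problem faced by player $i$ once $\phi_{-i}$ is frozen: its closed-loop dynamics $g(\cdot,\phi_i(\cdot),\phi_{-i}(\cdot))$ is Lipschitz with linear growth and its running payoff is bounded and Lipschitz, which is precisely the setting covered there.
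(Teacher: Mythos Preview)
Your proposal is correct and follows essentially the same route as the paper: write $W_{i,h}-W_i$ as an integral over $[0,\infty)$, split into a trajectory-error term and a discount-error term, control the former via \eqref{estimacion_dynamics} (or the sharper $|y-\tilde y|\le M_gh\,e^{Lt}$ when $g$ is bounded) together with the assumption $\rho>L+K$ (resp.\ $\rho>L$) to ensure integrability, and control the latter via $H3$ and $\theta_h-1=\mathcal{O}(h)$. The only cosmetic difference is that the paper puts the weight $e^{-\rho t}$ on the trajectory-error term and $f_i(\tilde y)$ in the discount-error term (bounding the latter directly by the mean value theorem), whereas you put $\beta_h^{[s/h]}$ on the trajectory-error term, bound it by $e^{\rho h}e^{-\rho s}$, and handle the discount-error integral by a two-step split; both variants yield the same $Ch$ bound.
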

\begin{proof}
In the proof we will use the same notation as in Proposition \ref{consistency}.

We start by writing
\begin{align} \label{auxiliar_consitency_with_order}
\Bigl |W_{i,h}({\phi}_i,{\phi}_{-i}&,x)- W_i(\phi_i,\phi_{-i},x)\Bigr |\le\nonumber\\
&\int_0^\infty |f_i(y(t),\phi_i(y(t)),\phi_{-i}(y(t)))-f_i(\tilde{y}(t),{\phi}_i(\tilde{y}(t)),\phi_{-i}(\tilde{y}(t)))|e^{-\rho t}\, ds,\nonumber\\
&+\int_0^\infty|f_i(\tilde{y}(t),{\phi}_i(\tilde{y}(t)),\phi_{-i}(\tilde{y}(t)))||e^{-\rho t}-e^{- \rho\theta_h[t/h]h}|\, dt,
\end{align}
where $\theta_h=-\log(1-\rho h)/(\rho h)$.
Then, using  H2, (\ref{lipschitz_strategies}) and (\ref{estimacion_dynamics}) we have, if {$\rho>K+ L$},
\begin{equation*}
\begin{split}
\int_0^\infty |f_i(y(t),\phi_i(y(t)),&\phi_{-i}(y(t)))-f_i(\tilde{y}(t),{\phi}_i(\tilde{y}(t)),\phi_{-i}(\tilde{y}(t)))|e^{-\rho t}\, dt\le \\
&hKL_i(1+NL_s)\int_0^\infty\left(1+(|x|+\sqrt{2Kt})e^{Kt}\right)e^{(L-\rho)t} \, dt \le C h,
\end{split}
\end{equation*}
for some constant $C>0$.

The second term in (\ref{auxiliar_consitency_with_order}) can be estimated using hypothesis H3 and the mean
 value theorem {applied to the function $e^{-\rho s}$}

\begin{equation*}
\begin{split}
\int_0^\infty|f_i(\tilde{y}(t),{\phi}_i(\tilde{y}(t)),\phi_{-i}(\tilde{y}(t)))|&|e^{-\rho t}-e^{- \rho\theta_h[t/h]h}|\, dt\le\\&{
M_f \int_0^\infty \rho\max\left\{e^{-\rho t},e^{-\rho\theta_h[t/h]h}\right\}\rho| t- \theta[t/h]h| \, dt.}
\end{split}
\end{equation*}
{Taking into account that $[t/h]h\le t\le [t/h]h+h$ and $\theta_h>1$ then $|t- \theta[t/h]h|\le (\theta_h-1)t+\theta_h h$. On the other hand, it is easy to check that $\max\left\{e^{-\rho t},e^{-\rho\theta_h[t/h]h}\right\}
\le e^{-\rho t} e^{\rho \theta_h}.$ Then}
\begin{equation*}
\begin{split}
\int_0^\infty|f_i(\tilde{y}(t),{\phi}_i(\tilde{y}(t)),\phi_{-i}(\tilde{y}(t)))|&|e^{-\rho t}-e^{- \rho\theta_h[t/h]h}|\, dt\le\\
&M_f\rho^2 {e^{\theta_h\rho }}\int_0^{\infty}e^{-\rho t}((\theta_h-1)t+\theta_h h) \, dt,\\
\end{split}
\end{equation*}
and since
 $ \theta_h-1=\mathcal{O}(h)$ as $h\rightarrow 0$, we conclude
\begin{equation*}
\int_0^\infty|f_i(\tilde{y}(t),{\phi}_i(\tilde{y}(t)),\phi_{-i}(\tilde{y}(t)))||e^{-\rho t}-e^{- \rho\theta_h[t/h]h}|\, dt\le Ch,
\end{equation*}
for some constant $C>0.$

Let us assume $H4$ and {$\rho>L$}.
We have immediately that{
$$
|y(t)-\tilde y(t)|\le M_g h e^{Lt},
$$
}
and  then
\begin{equation*}
\begin{split}
\int_0^\infty |f_i(y(t),\phi_i(y(t)),\phi_{-i}(y(t)))-f_i(\tilde{y}(t)&,{\phi}_i(\tilde{y}(t)),\phi_{-i}(\tilde{y}(t)))|e^{-\rho t}\, ds\le \\
&L_i(1+NL_s)M_gh\int_0^\infty e^{(L-\rho)t}\, dt \le Ch,
\end{split}
\end{equation*}
for some positive constant $C>0$ which finishes the proof.
\end{proof}
The following theorem is one of the main objectives of this paper. It states that a Markov Nash equilibrium of the discrete-time game is an approximate Nash equilibrium for the differential game in the sense that for $\epsilon>0$ arbitrary it constitutes an $\epsilon$-Nash equilibrium for $h$ small enough.
\begin{theorem}\label{epsilon-Nash}
 Let $(\phi_1^h,\dots,\phi_N^h)$  a Markov Nash equilibrium of the discrete time game
\eqref{objective_discrete}-\eqref{dynamics_discrete} that satisfies \eqref{lipschitz_strategies}. Let us assume that hypotheses $H1$, $H2$ and $H3$ are satisfied. Let  $\epsilon>0$. There exists $h_0>0$ such that for $h\le h_0$  and all $x\in\mathbb{V}$, if $(\psi_1,\dots,\psi_n)\in\mathcal{U}$ is a $N$-tuple of arbitrary admissible stationary strategies satisfying \eqref{lipschitz_strategies}, then
\begin{equation*}
W_i(\phi_i^h,\phi_{-i}^h,x)\ge W_i(\psi_i,\phi_{-i}^h,x)-\epsilon,\quad i=1,\dots, N.
\end{equation*}
\end{theorem}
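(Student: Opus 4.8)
The plan is to insert the discrete functionals between the continuous ones and then exploit that $(\phi_1^h,\dots,\phi_N^h)$ is an equilibrium of the discrete-time game. Fix $i$, $x\in\mathbb{V}$, and a competitor tuple $(\psi_1,\dots,\psi_N)\in\mathcal{U}$ satisfying \eqref{lipschitz_strategies}. I would write
\begin{align*}
W_i(\phi_i^h,\phi_{-i}^h,x)-W_i(\psi_i,\phi_{-i}^h,x)
&=\bigl(W_i(\phi_i^h,\phi_{-i}^h,x)-W_{i,h}(\phi_i^h,\phi_{-i}^h,x)\bigr)\\
&\quad+\bigl(W_{i,h}(\phi_i^h,\phi_{-i}^h,x)-W_{i,h}(\psi_i,\phi_{-i}^h,x)\bigr)\\
&\quad+\bigl(W_{i,h}(\psi_i,\phi_{-i}^h,x)-W_i(\psi_i,\phi_{-i}^h,x)\bigr).
\end{align*}
Here $(\psi_i,\phi_{-i}^h)\in\mathcal{U}$ by the remark following Definition \ref{markovian_strategies}, so the middle bracket is $\ge 0$ by the defining inequality \eqref{nash_time_discrete} of a discrete-time Markov Nash equilibrium. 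Hence the theorem reduces to bounding the first and third brackets by $\epsilon/2$ in absolute value once $h$ is small, \emph{uniformly} in $x$ and in the competitor.

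Each of those two brackets is a consistency error $W_{i,h}(\sigma_i,\sigma_{-i},x)-W_i(\sigma_i,\sigma_{-i},x)$ for a tuple $\sigma$ — namely $\sigma=(\phi_i^h,\phi_{-i}^h)$ for the first and $\sigma=(\psi_i,\phi_{-i}^h)$ for the third — and in both cases $\sigma$ lies in $\mathcal{U}$ and obeys \eqref{lipschitz_strategies} with the common constant $L_s$. Proposition \ref{consistency} gives the limit $0$ for a \emph{fixed} tuple, but here $\sigma$ varies with $h$ through $\phi^h$, so I would re-read the proof of Proposition \ref{consistency} and observe that every constant occurring there — $M$ from H3, $K$ from \eqref{bound_dynamics}, $L=L_g(1+NL_s)$, and the $L_i$ from H2 — depends only on the data and on $L_s$, not on the particular strategies. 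Concretely, for a fixed horizon $T$ and $J=\lceil T/h\rceil$, the four terms in \eqref{estimation_functional} are bounded respectively by $C_1(T)h$, $M(T+1)\bigl((\theta_h-1)(T+1)+\theta_h h\bigr)$, $\tfrac{M}{\rho}e^{-\rho T}$ and $\tfrac{M}{\rho}e^{-\rho T}$, where $C_1(T)$ depends on $T$ and the data but not on $\sigma$. Given $\epsilon$, one first chooses $T$ with $\tfrac{M}{\rho}e^{-\rho T}<\epsilon/8$, then $h_0$ so small that for $h\le h_0$ the first two terms are each $<\epsilon/8$; this bounds the consistency error by $\epsilon/2$ for every admissible tuple satisfying \eqref{lipschitz_strategies} with constant $L_s$. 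Applying this to $\sigma=(\phi_i^h,\phi_{-i}^h)$ and to $\sigma=(\psi_i,\phi_{-i}^h)$, inserting into the decomposition, and using the middle bracket $\ge 0$, yields the claim.

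The main obstacle is precisely this passage from the pointwise Proposition \ref{consistency} to a bound uniform over the whole Lipschitz class of feedbacks: Proposition \ref{consistency} cannot be used as a black box because the equilibrium strategies $\phi^h$ move with $h$, so the argument must verify that nothing in those estimates depends on the strategies beyond $L_s$. A secondary technical point is uniformity in the initial state: the bound \eqref{estimacion_dynamics}, through \eqref{eq:cota_sol_or}, carries a factor depending on $|x|$, so if $\mathbb{V}$ is unbounded one should either restrict $x$ to a fixed bounded set or strengthen to the dissipativity hypotheses of Proposition \ref{consitency_with_order} (which in addition provide the explicit rate $\mathcal{O}(h)$); when $\mathbb{V}$ is bounded the $|x|$-dependent factors are automatically uniform and no extra work is needed.
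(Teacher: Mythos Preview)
Your proof is correct and follows essentially the same route as the paper: the same three-term decomposition inserting the discrete functionals, the discrete Nash inequality \eqref{nash_time_discrete} for the middle term, and the consistency result Proposition~\ref{consistency} for the two outer terms, together with the observation that the bounds there depend on the strategies only through the Lipschitz constant $L_s$. Your discussion is in fact more explicit than the paper's about the needed uniformity over the class of Lipschitz feedbacks, and you correctly flag the $|x|$-dependence coming from \eqref{eq:cota_sol_or}--\eqref{estimacion_dynamics}, a point the paper glosses over.
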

\begin{proof}
From Proposition \ref{consistency} we known that given $\epsilon>0$ there exists a constant $h_0$ such that for every $h\le h_0$ and every $N$-tuple $(\varphi_1,\dots,\varphi_N)\in\mathcal{U}$ satisfying (\ref{lipschitz_strategies})
$$\bigl|W_{i,h}(\varphi_i,\varphi_{-i},x)-W_i(\varphi_i,\varphi_{-i},x)\bigr|\le \frac{\epsilon}{2}.$$

Using (\ref{nash_time_discrete}) we get
\begin{align*}
W_{i}(\phi_i^h,\phi_{-i}^h,x)
&=\bigl (W_{i}(\phi_i^h,\phi_{-i}^h,x)-W_{i,h}({\phi}_i^h,{\phi}_{-i}^h,x)\bigr )
+W_{i,h}({\phi}_i^h,{\phi}_{-i}^h,x)\\
&\ge\bigl(W_{i}(\phi_i^h,\phi_{-i}^h,x)-W_{i,h}({\phi}_i^h,{\phi}_{-i}^h,x)\bigr)
+W_{i,h}(\psi_i,{\phi}_{-i}^h,x)\\
 &=W_{i}(\psi_i,\phi_{-i}^h,x)
 +\bigl (W_{i}(\phi_i^h,\phi_{-i}^h,x)-W_{i,h}({\phi}_i^h,{\phi}_{-i}^h,x)\bigr)\\
 &\phantom{=W_{i}(\psi_i,\phi_{-i}^h,x))} + \bigl(W_{i,h}({\psi}_i,{\phi}_{-i}^h,x)-W_{i}(\psi_i,\phi_{-i}^h,x)\bigr).
\end{align*}
Then, noting that all the bounds in the proof of  Proposition \ref{consistency} depend only on $H1$, $H2$, $H3$ and the constant $L_s$, we have that for $h$ small enough
\begin{equation*}
W_{i}(\phi_i^h,\phi_{-i}^h,x)
\ge W_{i}(\psi_i,\phi_{-i}^h,x)-\epsilon.
\end{equation*}
\end{proof}
Next theorem is a refinement of Theorem \ref{epsilon-Nash} with the more exigent hypotheses of Proposition \ref{consitency_with_order}.
\begin{theorem}\label{th4}
Let $(\phi_1^h,\dots,\phi_N^h)$  a Markov Nash equilibrium of the discrete time game
\eqref{objective_discrete}-\eqref{dynamics_discrete} that satisfy \eqref{lipschitz_strategies}. Let us assume that hypothesis $H1$, $H2$ and $H3$ are satisfied. Furthermore, let assume that either  $\rho> L$, with $L=L_g(1+NL_s)$ and $H4$ holds or {$\rho>L_g+K$} with $K$ the constant in \eqref{bound_dynamics}.
 There exists a
positive $C>0$ and $h_0>0$ such that for all $h\le h_0$ and all $x\in\mathbb{V}$, if $(\psi_1,\dots,\psi_n)\in\mathcal{U}$ is a $N$-tuple of arbitrary admissible stationary strategies satisfying \eqref{lipschitz_strategies}, then
\begin{equation*}
W_{i}(\phi_i^h,\phi_{-i}^h,x)\ge W_{i}(\psi_i,\phi_{-i}^h,x)-C h.
\end{equation*}
\end{theorem}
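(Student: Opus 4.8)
The plan is to repeat the telescoping argument used in the proof of Theorem~\ref{epsilon-Nash} essentially verbatim, but to feed in the quantitative estimate of Proposition~\ref{consitency_with_order} in place of the purely qualitative statement of Proposition~\ref{consistency}. By hypothesis $(\phi_1^h,\dots,\phi_N^h)$ satisfies \eqref{lipschitz_strategies} and $(\psi_1,\dots,\psi_N)$ satisfies \eqref{lipschitz_strategies} with the \emph{same} constant $L_s$; moreover the extra assumption of the theorem ($\rho>L$ with $g$ bounded, or $\rho>L+K$) is in force. Hence Proposition~\ref{consitency_with_order} applies to both $N$-tuples of strategies $(\phi_i^h,\phi_{-i}^h)$ and $(\psi_i,\phi_{-i}^h)$ --- the latter being admissible because $\mathcal{U}$ is a product set, as noted right after Definition~\ref{markovian_strategies}.

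Concretely, I would first fix $h_0>0$ and $C_0>0$ as furnished by Proposition~\ref{consitency_with_order}, so that for every $h\le h_0$ and every $x\in\mathbb{V}$ both
$|W_{i,h}(\phi_i^h,\phi_{-i}^h,x)-W_i(\phi_i^h,\phi_{-i}^h,x)|\le C_0 h$ and
$|W_{i,h}(\psi_i,\phi_{-i}^h,x)-W_i(\psi_i,\phi_{-i}^h,x)|\le C_0 h$ hold. Then, invoking the discrete-time Nash inequality \eqref{nash_time_discrete}, namely $W_{i,h}(\phi_i^h,\phi_{-i}^h,x)\ge W_{i,h}(\psi_i,\phi_{-i}^h,x)$, I would chain the three bounds,
\[
W_i(\phi_i^h,\phi_{-i}^h,x)\ge W_{i,h}(\phi_i^h,\phi_{-i}^h,x)-C_0h\ge W_{i,h}(\psi_i,\phi_{-i}^h,x)-C_0h\ge W_i(\psi_i,\phi_{-i}^h,x)-2C_0h,
\]
and conclude with $C=2C_0$.

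The only point genuinely worth checking --- the analogue of the remark "all the bounds in the proof of Proposition~\ref{consistency} depend only on $H1$, $H2$, $H3$ and the constant $L_s$" in the proof of Theorem~\ref{epsilon-Nash} --- is that the constant $C_0$ of Proposition~\ref{consitency_with_order} can be taken independent of the particular admissible strategies and of $x$. Inspecting that proof, $C_0$ is assembled only from $L_g$, the $L_i$, $M$, $\rho$, the Lipschitz constant $L_s$ (entering through $L=L_g(1+NL_s)$) and the growth constant $K$ of \eqref{bound_dynamics}; it does not see \emph{which} $L_s$-Lipschitz admissible strategies have been substituted, so one and the same $C_0$ works for $(\phi_i^h,\phi_{-i}^h)$ and for $(\psi_i,\phi_{-i}^h)$ simultaneously. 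The subtlety I expect to have to handle carefully is the dependence on $|x|$: in the bounded-$g$ branch the estimate $|y(t)-\tilde y(t)|\le M_g h\,e^{Lt}$ produces a constant free of $|x|$, so $C_0$ is genuinely uniform over $\mathbb{V}$; in the branch $\rho>L+K$ with $g$ possibly unbounded the bound carries the factor $\int_0^\infty\bigl(1+(|x|+\sqrt{2Ks})e^{Ks}\bigr)e^{(L-\rho)s}\,\mathrm{d}s$, finite since $\rho>L+K$ but growing with $|x|$, so there uniformity in $x$ is to be read as local uniformity (and is global if $\mathbb{V}$ is bounded). Apart from pinning down this reading of "for all $x\in\mathbb{V}$", the argument is a line-by-line transcription of the proof of Theorem~\ref{epsilon-Nash} with $\epsilon/2$ replaced by $C_0 h$.
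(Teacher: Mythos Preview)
Your proposal is correct and follows exactly the paper's approach: the paper's proof consists of a single sentence stating that the argument is identical to that of Theorem~\ref{epsilon-Nash} with Proposition~\ref{consitency_with_order} substituted for Proposition~\ref{consistency}. Your discussion of the uniformity of $C_0$ in the strategies and of the $|x|$-dependence in the unbounded-$g$ branch is more careful than what the paper writes explicitly, but the underlying argument is the same.
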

\begin{proof} The proof is  exactly the same as in Theorem~\ref{epsilon-Nash} using now Proposition~\ref{consitency_with_order} instead of Proposition~\ref{consistency}.
\end{proof}
\section{Fully discrete case}

Let $ \Omega\subseteq\mathbb{V}$ be a bounded polyhedron in $\mathbb R^n$ such that for sufficiently small $h>0$ {the following inward pointing condition on the dynamics} holds
\begin{equation}\label{invariance}
x+hg(x,u_i,u_{-i})\in \overline \Omega,\quad \forall x \in \overline \Omega, \ u_i\in \mathbb{U}_i.
\end{equation}
Let $\left\{S_j\right\}_{j=1}^{m_s}$ be a family of simplices which defines a regular triangulation of $\Omega$
$$
\overline \Omega=\bigcup_{j=1}^{m_s} S_j,
$$
and let $ k=\max_{1\le j\le m_s}({\rm diam} \ S_j)$. We assume we have $n_s$ vertices (nodes), denoted $x^1,\ldots,x^{n_s}$, in the triangulation. Let $V^k$ be the space of piecewise affine functions from $\overline \Omega$ to ${\mathbb R}$ which are continuous in $\overline \Omega$ having constant gradients in the interior of any simplex $S_j$ of the triangulation.
\begin{definition}\label{definition_piecewise_value_approximation}
Let $(\phi_1^j,\ldots,\phi_N^j)$, $j=1,\ldots,n_s$, with $\phi_i^j\in {\mathbb U}_i$, $i=1,\dots, N$, $j=1,\ldots,n_s$.
Assume that there exist continuous piecewise affine functions $V_{i,h,k}\in V^k$, $i=1,\dots,N$, such that
\begin{equation}\label{fully-Bellman}
 V_{i,h,k}(x^j)=\max_{u_i^j\in\mathbb{U}_i}\left\{hf_i(x^j,u_{i}^j,\phi_{-i}^j)+\beta_h V_{i,h,k}(x^j+hg(x^j,u_{i}^j,\phi_{-i}^j))\right\},
\end{equation}
for any vertex,  $x^j\in \overline \Omega$, $j=1,\ldots,n_s$, and that $\phi_i^j$  is a maximizer of the right hand side of \eqref{fully-Bellman} for every $i=1,\dots,N$, $j=1,\dots n_s$.
Then, the function $V_{i,h,k}\in V^k$ defined
by its nodal values in \eqref{fully-Bellman} is the fully discrete approximation to the value function for player $i$, $i=1,\dots, N$.
\end{definition}
{To compute the numerical approximations \eqref{fully-Bellman} one fixes all the strategies except for one
to compute the maximum. A contraction argument analogous to that of the semi-discrete case (see Remark \ref{remark2}) can be applied to
guarantee existence and uniqueness of that maximum, see also \cite[Theorem 1.1, Appendix A]{Bardi}}.

Let us define $\phi_i^{h,k}\in V^k$ as the piecewise affine function determined by
\begin{equation}\label{piecewise_linear_epsilon_Nash}
\phi_i^{h,k}(x^j)=\phi_i^j.
\end{equation}
The rest of this section is devoted to prove that the strategies $(\phi_1^{h,k},\dots,\phi_N^{h,k})$ are an $\epsilon$-Nash for the differential game \eqref{objective}-\eqref{dynamics}.

Next, we define an auxiliary time-discrete game such that its value functions coincide with $V_{i,h,k}$.

\begin{definition}
Let $(\phi_1,\dots,\phi_N)\in \mathcal{U}$ admissible strategies. Let us define
\begin{equation}
W_{i,h,k}(\phi_i,\phi_{-i},x_0):=h\sum_{n=0}^\infty \beta_h^n I_kf_i(x_n,\phi_i(x_n),\phi_{-i}(x_n)), \label{fully-objective_discrete}
\end{equation}
subject to
\begin{equation}\label{fully-dynamics_discrete}
x_{n+1}=x_n+hI_kg(x_n,\phi_i(x_n),\phi_{-i}(x_n)),
\end{equation}
where
\begin{eqnarray}\label{defino1b}
I_kg(x,\phi_i(x),\phi_{-i}(x))&=&\sum_{j=1}^{n_s}\mu_j(x)g(x^j,\phi_i(x^j),\phi_{-i}(x^j)),\\
I_kf_i(x,\phi_i(x),\phi_{-i}(x))&=&\sum_{j=1}^{n_s}\mu_j(x)f_i(x^j,\phi_i(x^j),\phi_{-i}(x^j)).\label{defino2b}
\end{eqnarray}
Here, $\mu_j(x)$, $j=1,\dots,n_s$ denote the barycentric coordinates of $x\in\overline \Omega$ with respect to the triangulation $\{S_j\}_{j=1}^{m_s}$.
\end{definition}
We recall that the barycentric coordinates of $x \in\overline\Omega$ with respect to the triangulation $\{S_j\}_{j=1}^{m_s}$ is the set of real numbers $\mu_j(x)$, $j=1,\dots,n_s$ defined by
$$x=\sum_{j=1}^{n_s}\mu_j(x)x^j,\quad 0\le\mu_j(x)\le 1,\quad \sum_{j=1}^{n_s}\mu_j(x)=1,$$
where $\{x^j\}_{j=1}^{n_s}$ are the nodes of the partition.

Let us observe that a Markov Nash Equilibrium for the time-discrete game \eqref{fully-objective_discrete}-\eqref{fully-dynamics_discrete} is defined only by its values at the nodes $x^j$, $j=1,\ldots,n_s$, (see \eqref{defino1b}, \eqref{defino2b}). Using this observation and arguing as in \cite[Theorem 3]{Javier_yo}, we can prove the following theorem that states that the functions $V_{i,h,k}$  in \eqref{fully-Bellman} are, in fact the value functions for \eqref{fully-objective_discrete}-\eqref{fully-dynamics_discrete}.

\begin{theorem}\label{verification_fully-time_discrete}
The set of piecewise affine strategies $(\phi_1^{h,k},\ldots,\phi_N^{h,k})$ defined in \eqref{piecewise_linear_epsilon_Nash} is a Markov Perfect Nash equilibrium for \eqref{fully-objective_discrete}-\eqref{fully-dynamics_discrete}. Moreover, the functions defined in \eqref{fully-Bellman} satisfy
$$V_{i,h,k}(x)=W_{i,h,k}(\phi_i^{h,k},\phi_{-i}^{h,k},x),\quad x\in\Omega,$$
\end{theorem}
For the proof of the main results of this section we introduce a discrete auxiliary  function in the following definition. This
function is compared with $W_i$  (see \eqref{objective}) in Propositions \ref{consistency_fully} and \ref{consitency_with_order_fully} below.
{In the following definition a new interpolant operator, $\tilde I_k,$ is introduced for which we apply interpolation respect to the first argument while the rest of the arguments behave as parameters.}
\begin{definition} Let $\bs{u}_i$, $i=1,\dots, N$ such that
$$
\bs{u}_i=\left\{u_{i,0},u_{i,1},\ldots\right\},\quad u_{i,n}\in\mathbb{U}_i,\quad n\ge 0,\quad i=1,\dots, N,
$$
and let us denote by $\mu_j(x)$, $j=1,\dots, n_s$, $x\in \overline\Omega$, the barycentric coordinates with respect the partition $\{S_j\}_{j=1}^{m_s}$.
Then,
\begin{equation}
\widetilde W_{i,h,k}(\bu_i,\bu_{-i},x_0):=h\sum_{n=0}^\infty \beta_h^n \tilde{I}_kf_i(x_n,u_{i,n},u_{-i,n}), \label{fully-objective_discrete_aux}
\end{equation}
subject to
\begin{equation}\label{fully-dynamics_discrete_aux}
x_{n+1}=x_n+h\tilde{I}_kg(x_n,u_{i,n},u_{-i,n}),
\end{equation}
where
\begin{eqnarray*}\label{defino1}
\tilde{I}_kg(x_n,u_{i,n},u_{-i,n})&=&\sum_{j=1}^{n_s}\mu_j(x_n)g(x^j,u_{i,n},u_{-i,n}),\\
\tilde{I}_kf_i(x_n,u_{i,n},u_{-i,n})&=&\sum_{j=1}^{n_s}\mu_j(x_n)f_i(x^j,u_{i,n},u_{-i,n}).\label{defino2}
\end{eqnarray*}
Given $(\phi_1,\dots,\phi_N)\in\mathcal{U}$ an $N$-tuple of admissible strategies we also define
$$
\widetilde W_{i,h,k}(\phi_{i},\phi_{-i},{x_0})=\widetilde W_{i,h,k}(\bu_i,\bu_{-i},x_0),
$$
where $u_{j,n}=\phi_j(x_n)$ and $x_n$ is defined in \eqref{fully-dynamics_discrete_aux}.
\end{definition}
The following proposition is the analogous to Proposition \ref{consistency} for the fully discrete case.
\begin{proposition}\label{consistency_fully}
Let  $(\phi_1,\dots,\phi_N)\in\mathcal{U}$ be an $N$-tuple of admissible strategies.  Then
\begin{equation*}
\lim_{h\rightarrow 0, k\rightarrow 0} \left|\widetilde W_{i,h,k}(\phi_{i},\phi_{-i},x)- W_i(\phi_{i},\phi_{-i},x)\right|=0,
\quad i=1,\dots,N.
\end{equation*}
\end{proposition}
\begin{proof}
The proof follows the arguments of Proposition \ref{consistency}  with the technique of the proof of \cite[Lemma 1]{Javier_yo}
to deal with the extra terms coming from the interpolation error.
\end{proof}
The following proposition is analogous to Proposition \ref {consitency_with_order}
\begin{proposition}\label{consitency_with_order_fully}
Assume conditions of Proposition 3 hold.  Assume also that {$\rho> L$}, with $L=L_g(1+NL_s)$.  Then, there exist
positive constants $C$ and $h_0>0$ such that for all $h\le h_0$
\begin{equation*}
\left|\widetilde W_{i,h,k}(\phi_{i},\phi_{-i},x)- W_i(\phi_{i},\phi_{-i},x)\right|\le C (h+k).
\end{equation*}
\end{proposition}
\begin{proof}
Since we are assuming H4 ($g$ bounded), the proof of Proposition \ref{consitency_with_order_fully} can be
obtained arguing as in Proposition \ref{consitency_with_order} for the case in which $g$ is bounded. As before, we also argue as in
\cite[Lemma 2]{Javier_yo}, to deal with the interpolation errors.
\end{proof}
\begin{theorem}\label{epsilon-Nash_fully}
Let $\phi_1^{h,k}, \dots,\phi_N^{h,k}$ the piecewise affine functions defined in \eqref{piecewise_linear_epsilon_Nash}
Let us denote by $L_d$ the constant defined by
\begin{equation}\label{lip_dis}
L_d=\max_{1\le j\le n_s,1\le i\le N}\frac{| \phi_i^j-\phi_i^l|} {|x^j-x^l|}.
\end{equation}
where $\phi_i^{h,k}(x^j)=\phi_i^j$.
Let us assume that hypotheses $H1$, $H2$, $H3$ and $H4$ are satisfied.

Let  $\epsilon>0$. There exists $h_0>0, k_0>0$,  with $k_0$ depending on $L_d$, such that for $h\le h_0$, $k\le k_0$   and all $x\in\Omega$, if $(\psi_1,\dots,\psi_n)\in\mathcal{U}$ is a $N$-tuple of arbitrary admissible stationary strategies satisfying \eqref{lipschitz_strategies}, then
\begin{equation}\label{final_bound}
W_i(\phi_i^{h,k},\phi_{-i}^{h,k},x)\ge W_i(\psi_i,\phi_{-i}^{h,k},x)-\epsilon,\quad i=1,\dots, N.
\end{equation}
\end{theorem}
\begin{proof}
The proof of the following theorem is similar to the proof of Theorem  \ref{epsilon-Nash} applying
Proposition \ref{consistency_fully} instead of Proposition \ref{consistency}.

Arguing as in Proposition \ref{consitency_with_order} and using standard interpolation arguments together
with \eqref{lip_dis} it can be proved
\begin{equation}\label{con_29}
|W_{i,h,k}(\phi_{i}^{h,k},\phi_{-i}^{h,k},x)
-\widetilde W_{i,h,k}(\phi_{i}^{h,k},\phi_{-i}^{h,k},x)|\le Ck.
\end{equation}
From Proposition \ref{consistency_fully} and the above inequality,  given $\epsilon>0$, there exist positive constants $h_0$, $k_0$ such that for $h\le h_0$, $k\le k_0$
\begin{eqnarray}\label{both}
\bigl|\widetilde W_{i,h,k}(\phi_{i}^{h,k},\phi_{-i}^{h,k},x)-W_i(\phi_i^{h,k},\phi_{-i}^{h,k},x)\bigr|&\le& \frac{\epsilon}{4}\nonumber\\
|W_{i,h,k}(\phi_{i}^{h,k},\phi_{-i}^{h,k},x),x)
-\widetilde W_{i,h,k}(\phi_{i}^{h,k},\phi_{-i}^{h,k},x)&\le&\frac{\epsilon}{4}.
\end{eqnarray}
Adding and subtracting terms and using Theorem \ref{verification_fully-time_discrete} we can write
\begin{align}\label{drei}
W_i(\phi_i^{h,k},\phi_{-i}^{h,k},x)&=(W_i(\phi_i^{h,k},\phi_{-i}^{h,k},x)-W_{i,h,k}(\phi_i^{h,k},\phi_{-i}^{h,k},x))+W_{i,h,k}(\phi_i^{h,k},\phi_{-i}^{h,k},x)\nonumber\\
&\ge(W_i(\phi_i^{h,k},\phi_{-i}^{h,k},x)-W_{i,h,k}(\phi_i^{h,k},\phi_{-i}^{h,k},x))+W_{i,h,k}(\psi_i,\phi_{-i}^{h,k},x)\nonumber\\
&=W_i(\psi_i,\phi_{-i}^{h,k},x)+(W_i(\phi_i^{h,k},\phi_{-i}^{h,k},x)-W_{i,h,k}(\phi_i^{h,k},\phi_{-i}^{h,k},x))\nonumber\\
&\quad +(W_{i,h,k}(\psi_i,\phi_{-i}^{h,k},x)-W_i(\psi_i,\phi_{-i}^{h,k},x)),
\end{align}
where $(\psi_1,\dots,\psi_N)\in\mathcal{U}$ is an $N$-tuple of admissible strategies satisfying (\ref{lipschitz_strategies}).
We now observe that applying \eqref{both}
\begin{align*}
|W_i(\phi_i^{h,k},\phi_{-i}^{h,k},x)-W_{i,h,k}(\phi_i^{h,k},&\phi_{-i}^{h,k},x))|\le
|W_{i}(\phi_i^{h,k},\phi_{-i}^{h,k},x)-\widetilde W_{i,h,k}(\phi_i^{h,k},\phi_{-i}^{h,k},x)|
\\
&+|\widetilde W_{i,h,k}(\phi_i^{h,k},\phi_{-i}^{h,k},x)-W_{i,h,k}(\phi_i^{h,k},\phi_{-i}^{h,k},x)|\le \frac{\epsilon}{2}
\end{align*}
The same argument can be applied to the last term in \eqref{drei} to conclude \eqref{final_bound}.
\end{proof}
Next theorem is analogous to Theorem \ref{th4} and its proof is similar but applying  Proposition \ref{consitency_with_order_fully}
instead of Proposition \ref{consitency_with_order} and arguing as in the previous theorem.
\begin{theorem}
Let assumptions of Theorem 6 hold. Furthermore, let us assume that $\rho>L$ with
$L=L_g(1+NL_s)$. There exists $h_0>0, k_0>0$ such that for $h\le h_0$, $k\le k_0$  and all $x\in\mathbb{V}$, if $(\psi_1,\dots,\psi_n)\in\mathcal{U}$ is a $N$-tuple of arbitrary admissible stationary strategies satisfying \eqref{lipschitz_strategies}, then
\begin{equation*}
W_i(\phi_i^{h,k},\phi_{-i}^{h,k},x)\ge W_i(\psi_i,\phi_{-i}^{h,k},x)-C(h+k),\quad i=1,\dots, N.
\end{equation*}\end{theorem}
\begin{remark}
Let us observe that since we have a finite number of elements $1\le j\le n_s$, $1\le i\le N$ in \eqref{lip_dis}
$L_d$ is always a finite constant. However, the constant in inequality \eqref{con_29} depends linearly
in $L_d$, which means that the size of $L_d$ influences the size of the error. In particular, a bigger constant will produce a bigger error. Let us also observe that the value of $L_d$ can always be computed in practice
giving an indicator on the size of the error. To get good numerical results one expects that the value of $L_d$ remains bounded as $h$, $k$ goes to $0$. However, even a value of $L_d$ not bounded for example in $k$, let say $L_d=O(k^{-\alpha})$ with $0<\alpha<1$ would lead to a bound of size
$k^{1-\alpha}$ in \eqref{con_29} which still allows convergence of the method although loosing  first order in time.
\end{remark}
\section{Numerical Experiments}


The numerical method employed in the experiments to compute the fully discrete solution will be the Value-Iteration based method developed in \cite[Appendix C]{DeFrutos2016}, which corresponds to a piecewise monotone interpolation method.

For a particular game, let $\phi^{h,k}_{i}$ denote the numerical solution obtained with this numerical method for a particular choice of $h,k>0$.

The objective is to check that, given $\epsilon>0$, for $h,k$ small enough and all $\psi_i$ admisible
\begin{equation}\label{main}
W_1(\phi_i^{h,k},\phi_{-i}^{h,k},x)\ge W_1(\psi_i,\phi_{-i}^{h,k},x)-\epsilon.
\end{equation}
For simplicity in the notation, assume that the game is played just by two players (the same argument can be directly extended to any number of players $N$). Furthermore, let us assume that player 2 always plays $\phi_{2}^{h,k}$.

Let $\Psi_1^{h,k}$ be the {best possible response} in the continuous game for player 1 to $\phi_{2}^{h,k}$. Since the strategy of player 2 is fixed, this best response corresponds to the solution of an optimal control problem.

For all $\psi_1$ admissible, it holds that
 $$W_1(\Psi_1^{h,k},\phi_{2}^{h,k},x)\ge W_i(\psi_1,\phi_{2}^{h,k},x),$$
and if
\begin{equation}\label{con2}
W_1(\phi_1^{h,k},\phi_{2}^{h,k},x)\ge W_1(\Psi_1^{h,k},\phi_{2}^{h,k},x)-\epsilon
\end{equation}
then we have for all admissible $\psi_1$
\begin{equation*}
\begin{split}
W_1(\phi_1^{h,k},\phi_2^{h,k},x)&\ge W_1(\Psi_1^{h,k},\phi_{2}^{h,k},x)-\epsilon\\
&\ge W_1(\psi_1,\phi_{2}^{h,k},x)-\epsilon.
\end{split}
\end{equation*}
Condition \eqref{con2}, and consequently condition \eqref{main}, follows if
\begin{equation}\label{equivcon2}
\lim_{h,k\rightarrow0}\max_{x}\bigl\{W_1(\Psi_1^{h,k},\phi_{2}^{h,k},x)-W_i(\phi_1^{h,k},\phi_2^{h,k},x)\bigr\}=0.
\end{equation}

In the case that an analytical best response solution cannot be computed, in order to approximate it for a particular choice of $h,k>0$, a numerical approximation $\Psi_1^{\Delta h,\Delta k}$ is computed, where $\Delta h$ and $\Delta k$ denote small enough time and spatial discretizations.

The value functions $W_i(\phi_1^{h,k},\phi_2^{h,k},x)$ (respectively $W_i(\Psi_1^{\Delta h,\Delta k},\phi_2^{h,k},x)$) are computed numerically approximating, with high precision
\begin{equation*}
\int_0^\infty f_i(x,\phi_1^{h,k},\phi_2^{h,k})e^{-\rho t} dt,
\end{equation*}
subject to:
\begin{equation*}
\dot{x}=g(x,\phi_1^{h,k},\phi_2^{h,k}),\quad x(0)=x_0.
\end{equation*}

\subsection{A transboundary pollution problem}
To illustrate the theoretical results we have chosen a model problem from \cite{DeFrutos2016},  where a transboundary pollution differential game is analyzed.

Each player $i, \ i\in\{1,...,N\}$, corresponds to a country, which has a pollution stock $p_i\geq 0, \ i\in\{1,...,N\}$ (state variables) and can control its own level of emissions $v_i\geq 0, \ i\in\{1,...,N\}$. Pollution may travel across countries, subject to a predetermined spatial relationships among them.

In particular, for simplicity, we have chosen the first example of a two players game presented in \cite{DeFrutos2016}. In  Figure 1, $\Omega_i, \ i\in\{1,2\}$ represents country $i$ and the spatial relationship between both countries allows pollution to travel among them according to Flick's Law.

It is important to note that Figure 1 is just a representation of the spatial relationship between countries. We
refer to \cite[Appendix B]{DeFrutos2016} for details. We denote by $p_i$ the average stock of pollution over the whole region $\Omega_i$, and by $v_i$ the averaged emissions over $\Omega_i$. The stock of pollution $[p_1,p_2]^T$ are the state variables and player $i$ controls the emission $v_i$ over $\Omega_i$. Therefore, the problem has two
 scalar state variables and one control variable per country.

The objective of player $i$, $i=1,2,$ is to find $v_i$ that maximizes
\begin{eqnarray}\label{w_exp}
W_i(v_i,v_{-i},p_{0,1},p_{0,2})=\int_0^\infty e^{-\rho t}\Big(v_i\big(A-\frac{v_i}{2}\big)-\frac{\varphi}{2}p_i^2\Big) \, dt,
\end{eqnarray}
subject to:
\begin{eqnarray}\label{edo_exp}
\begin{bmatrix} \dot p_1\\ \dot p_2 \end{bmatrix}=
\nu \begin{bmatrix} -1&\phantom{-}1\\\phantom{-}1&-1 \end{bmatrix}
\begin{bmatrix} p_1\\p_2 \end{bmatrix}-c\begin{bmatrix} p_1\\p_2 \end{bmatrix}
+\beta\begin{bmatrix}
v_1\\v_2
\end{bmatrix}.
\end{eqnarray}
\begin{figure}\label{fig_1}
\begin{center}
\includegraphics[width=7cm,height=4cm]{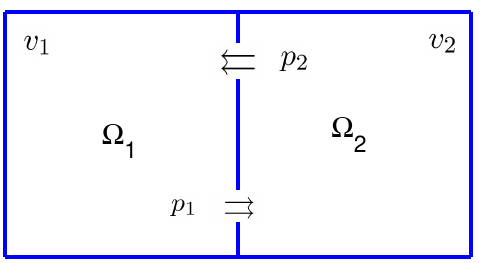}
\end{center}
\caption{Model problem}
\end{figure}
In \eqref{w_exp}-\eqref{edo_exp}, $A$, $\rho$, $\varphi$, $\nu$, $c$ and $\beta$ are constants, see \cite[Appendix B]{DeFrutos2016} for details.
In the numerical experiments of this subsection, the values of the parameters are $\varphi=1$, $A=0.5$, $\rho=0.01$ $\beta=1$, $c=0.5$.  Emissions must be postive and the Nash-equilibrium strategies correspond to a piece-wise affine function, that, as it is well-known, could be computed solving a set of Ricatti equations, see \cite{DeFrutos2016}.

Figure 2 represents the value function $W_1$ (left) for $(p_1,p_2)\in[0,2]\times[0,2]$ and the feed-back strategies $\phi_1$ (center) and $\phi_2$ (right). Since, both players are symmetric, so are their feed-back strategies.
\begin{figure}\label{fig_2}
\begin{center}
\includegraphics[width=15cm,height=4cm]{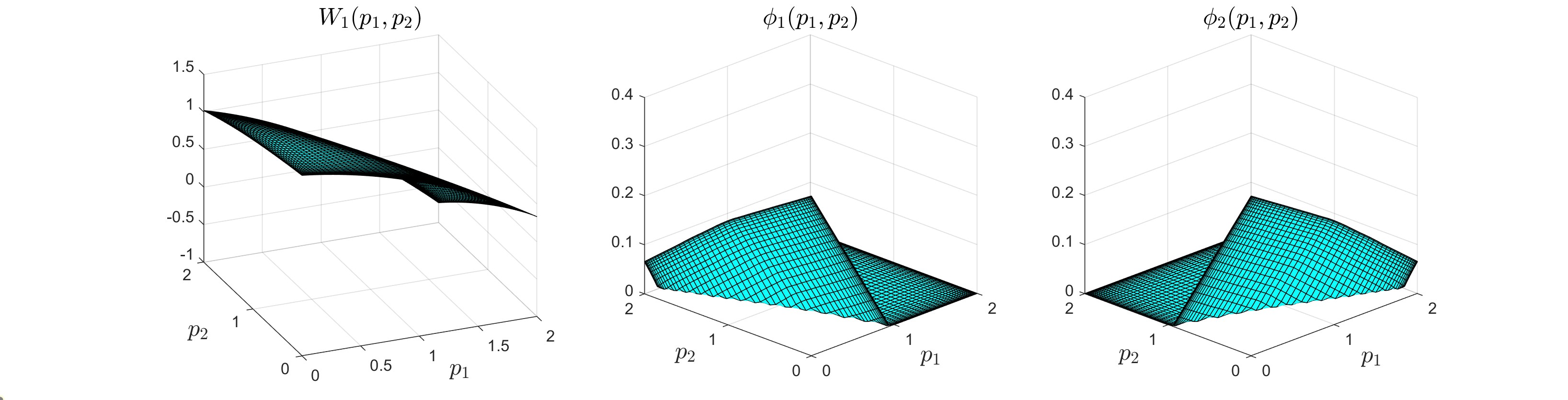}
\end{center}
\caption{Value Function $W_1$(left), and feed-back strategies $\phi_1$ (center), $\phi_2$ (right)}
\end{figure}
Let $h,k>0$ and $x^j=(p_{1,j},p_{2,j}), \ j=1,...,n^k_s$ the (equally-spaced) spatial discretization induced by $k$. The discrete Bellman equations of the fully discrete  problem are
\begin{equation*}
\begin{split}
V_{i,h,k}&=\max_{v_i^j\ge 0}\Big\{{h}\Big(v_i^j\big(A-\frac{v_i^j}{2}\big)-\frac{\varphi}{2}p_{i,j}^2\Big) \\
&+(1-\rho h)V_{i,h,k}\Big(\begin{bmatrix} p_{1,j}\\p_{2,j} \end{bmatrix}
+h\big( \nu \begin{bmatrix} -1&\phantom{-}1\\\phantom{-}1&-1 \end{bmatrix}
\begin{bmatrix} p_{1,j}\\p_{2,j} \end{bmatrix}-c\begin{bmatrix} p_{1,j}\\p_{2,j} \end{bmatrix}
+\beta\begin{bmatrix}
v_1^j\\v_2^j
\end{bmatrix}\big) \Big){\Big\}},
\end{split}
\end{equation*}
where $v_i^j$ represents the fully discrete approximation to $v_i(x^j)$.

First, we analyze (\ref{equivcon2}) with respect to the temporal discretization, taking a value $k$ small enough such that the spatial error discretization is negligible.

In Figure 3 we represent $W_1(\Psi_1^{\Delta h,\Delta k},\phi_{2}^{h,k},p_{01},p_{02})-W_1(\phi_1^{h,k},\phi_2^{h,k},p_{01},p_{02})$
for $h=1/4,1/8,1/16$ and $1/32$. As it can be seen, the best response $\Psi_1^{\Delta h,\Delta k}$ gives a better result than playing $\phi_1^{h,k}$ for any value $h$ and this difference decreases as $h\rightarrow 0$.
\begin{figure}
\begin{center}\label{fig_3}
\includegraphics[width=18cm]{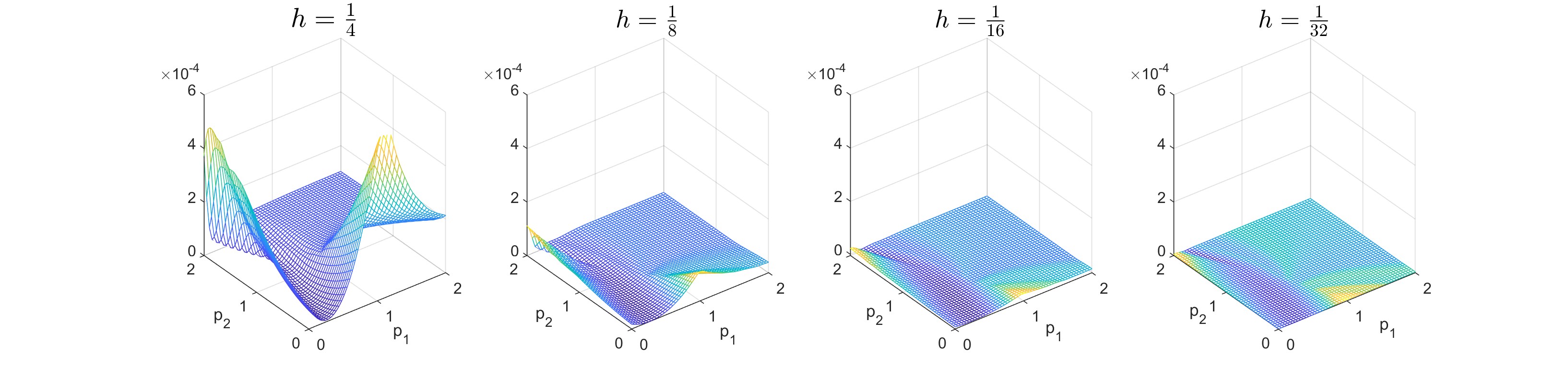}
\end{center}
\caption{$W_1(\Psi_1^{\Delta h,\Delta k},\phi_{2}^{h,k},p_{01},p_{02})-W_1(\phi_1^{h,k},\phi_2^{h,k},p_{01},p_{02})$ for $h=1/4, 1/8, 1/16$ and $1/32$}
\end{figure}

In Figure 4 we represent $\displaystyle\max_{p_{01},p_{02}}\bigl\{W_1(\Psi_1^{\Delta h,\Delta k},\phi_{2}^{h,k},p_{01},p_{02})-W_1(\phi_1^{h,k},\phi_2^{h,k},p_{01},p_{02})\bigr\}$ for the same values of $h$.
\begin{figure}\label{fig_4}
\begin{center}
\includegraphics[width=8cm]{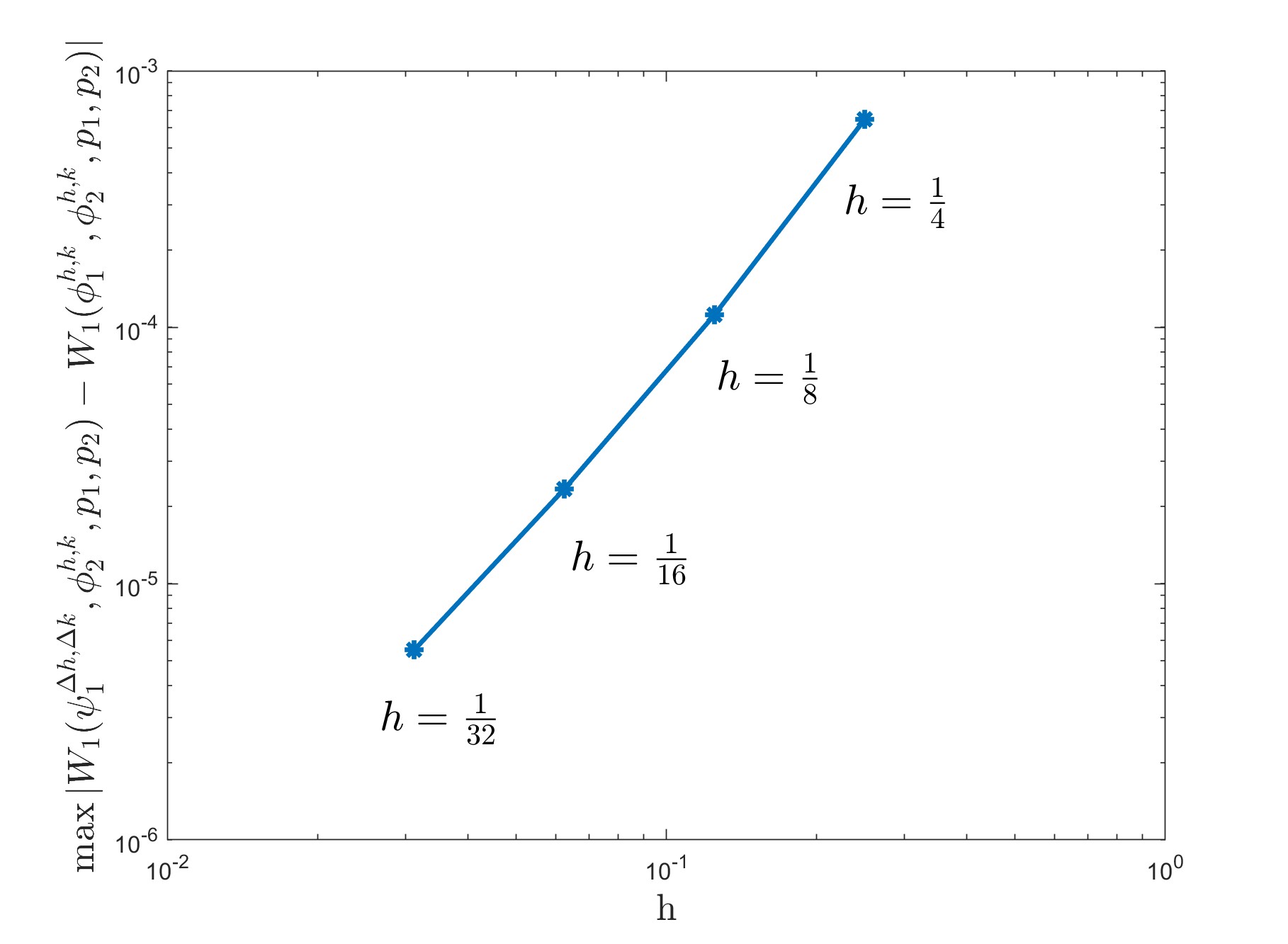}
\end{center}
\caption{$\displaystyle\max_{p_{01},p_{02}}\bigl\{W_1(\Psi_1^{\Delta h,\Delta k},\phi_{2}^{h,k},p_{01},p_{02})-W_1(\phi_1^{h,k},\phi_2^{h,k},p_{01},p_{02})\bigr\}$ for $h=1/4, h=1/8, h=1/16$ and $h=1/32$.}
\end{figure}
In this particular example, we also have convergence of strategies as it can be seen in Figure 5, where we represent $\displaystyle\max_{p_{01},p_{02}}\bigl\{\Psi_1^{\Delta h, \Delta k}- \phi_1^{h,k}\bigr\}$. The slope of the line is around $1$, giving first order of convergence.
\begin{figure}\label{fig_5}
\begin{center}
\includegraphics[width=8cm]{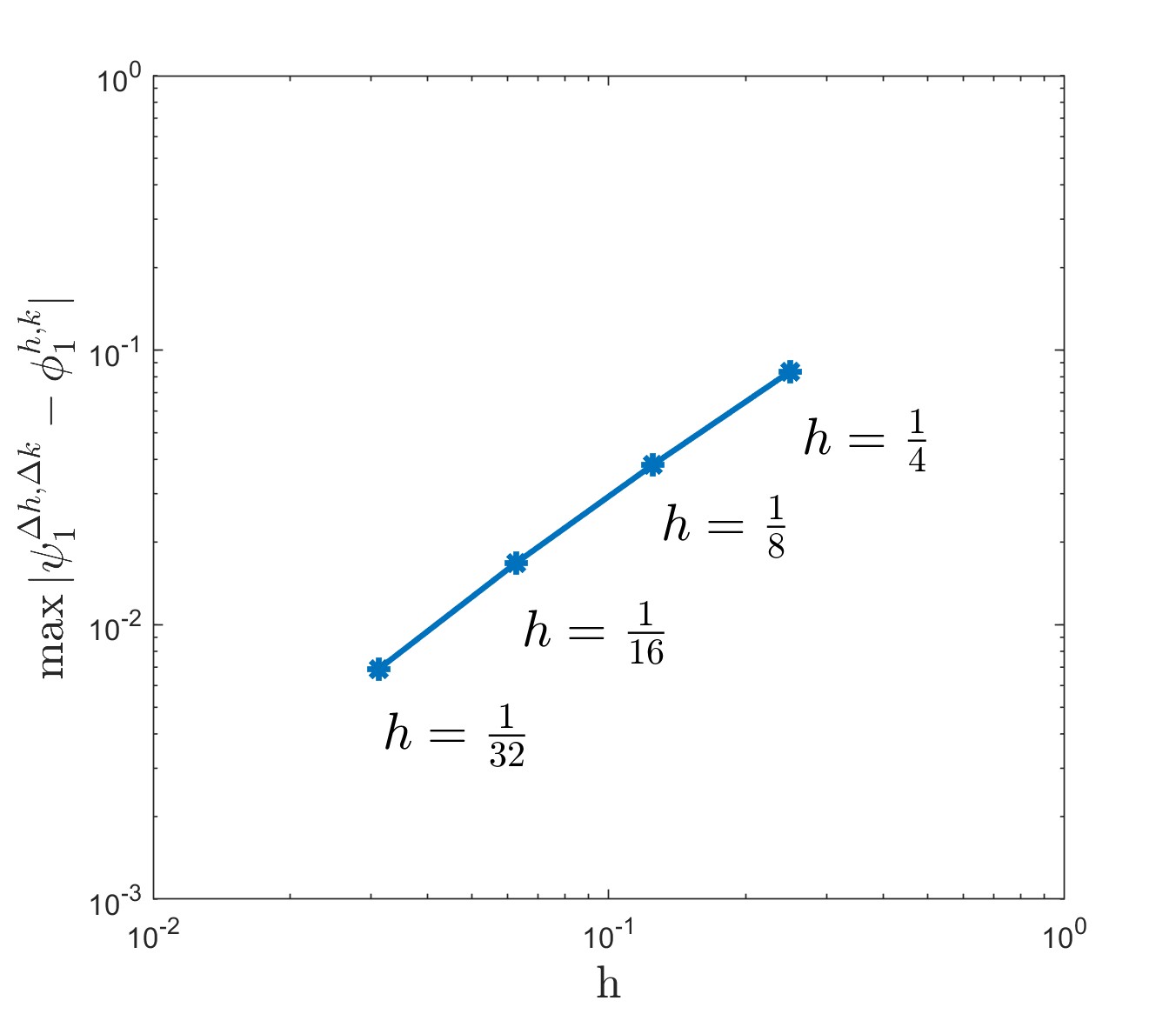}
\end{center}
\caption{$\displaystyle \max_{p_1,p_2} | \Psi_1^{dh,dk}(p_1,p_2)-\phi_{1}^{h,k}(p_1,p_2)|$
for $h=1/4, 1/8, 1/16$ and $1/32$}
\end{figure}

To analyze (\ref{equivcon2}) with respect to the spatial discretization, we take a value $h$ small enough, such that the temporal error discretization is negligible.
In Figure 6 we represent $W_1(\Psi_1^{\Delta h,\Delta k},\phi_{2}^{h,k},p_{01},p_{02})-W_1(\phi_1^{h,k},\phi_2^{h,k},p_{01},p_{02})$
for $k=1/2, 1/4$ and $1/8$. We obtain the same behaviour as in Figure 3. The best response $\Psi_1^{\Delta h,\Delta k}$ gives a better result than playing $\phi_1^{h,k}$ for any value $k$, with a difference that decreases as $k\rightarrow 0$.
\begin{figure}\label{fig_6}
\begin{center}
\includegraphics[width=18cm]{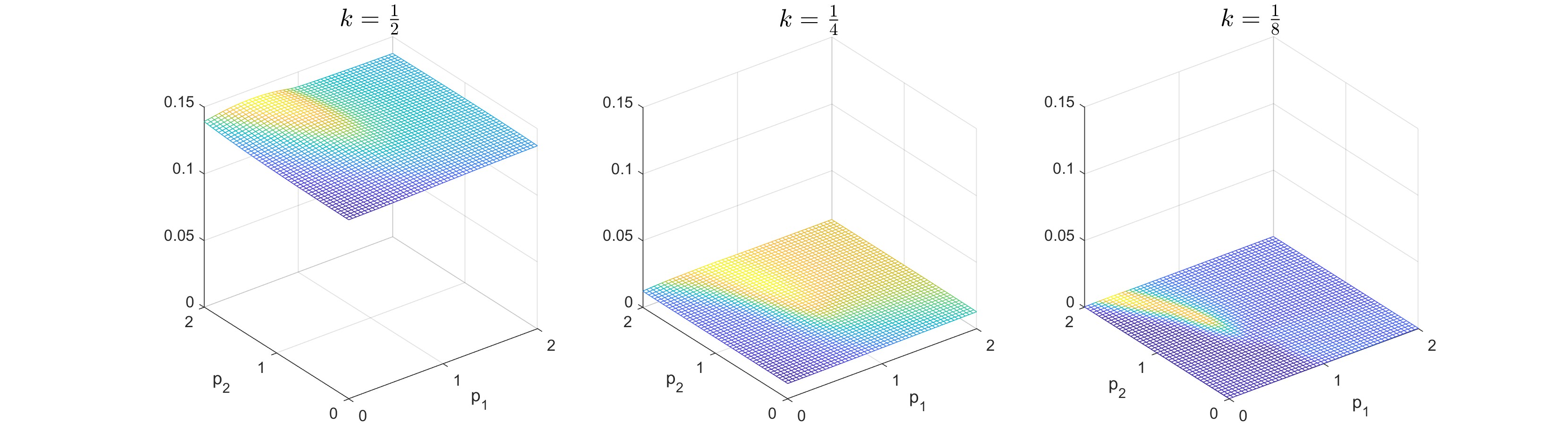}
\end{center}
\caption{$\displaystyle W_1(\Psi_1^{dh,dk},\phi_{2}^{h,k},p_{01},p_{02})-W_1(\phi_1^{h,k},\phi_2^{h,k},p_{01},p_{02})$ for $k=1/2, k=1/4$ and $k=1/8$.}
\end{figure}
Figure 7 represents $\displaystyle\max_{p_{01},p_{02}}\bigl\{W_1(\Psi_i^{\Delta h, \Delta k},\phi_{-i}^h,p_{01},p_{02})-W_1(\phi_1^h,\phi_2^h,p_{01},p_{02})\bigr\}$  (left) and $\displaystyle\max_{p_{01},p_{02}}\bigl\{\Psi_1^{\Delta h, \Delta k}- \phi_1^{h,k}\bigr\}$ (right) for $k=1/2, 1/4$ and $1/8$. We obtain a very fast convergence towards 0 in the value function and first order of convergence in the strategies.
\begin{figure}\label{fig_7}
\begin{center}
\includegraphics[width=12cm]{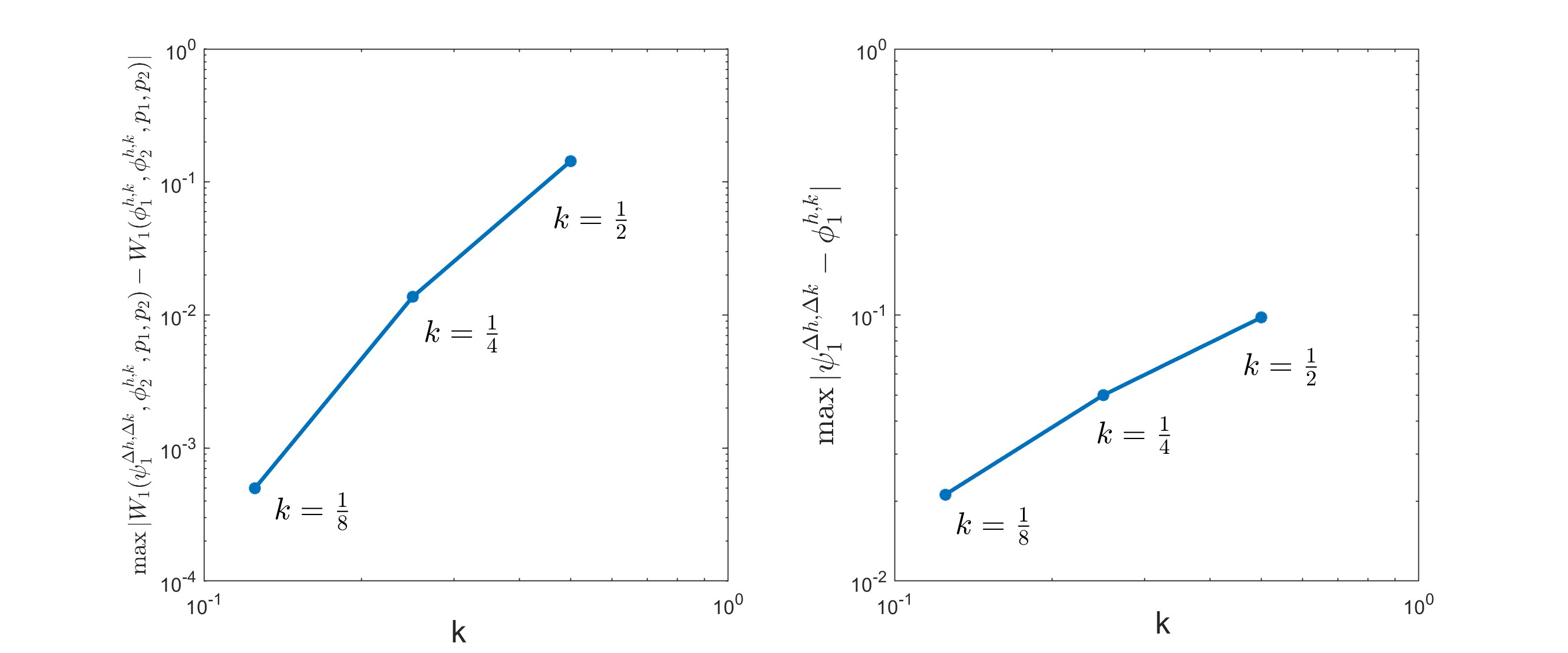}
\end{center}
\caption{$\displaystyle\max_{p_{01},p_{02}}\bigl\{W_1(\Psi_1^{dh,dk},\phi_{2}^{h,k},p_{01},p_{02})-W_1(\phi_1^{h,k},\phi_2^{h,k},p_{01},p_{02})\bigr\}$ (left) and $\displaystyle \max_{p_1,p_2} | \Psi_1^{dh,dk}(p_1,p_2)-\phi_{1}^{h,k}(p_1,p_2)|$ for $k=1/2, k=1/4$ and $k=1/8$.}
\end{figure}
\subsection{A Lanchester type differential game}
In order to check the performance of the method in a more complicated game not being linear-quadratic, we have chosen a Lanchester-type differential game where the objective of player $1$ is to find a strategy that maximizes
\begin{eqnarray}\label{w_lan}
\begin{aligned}
W_1(v_1,v_{2},x_0)&=\int_0^\infty e^{-\rho t}\Big(\pi_1x-\varphi v_1^{1{.}5}\Big) \, dt,\\
\end{aligned}
\end{eqnarray}
the objective of player $2$ is to maximize
\begin{eqnarray}\label{w_lan2}
\begin{aligned}
W_2(v_1,v_{2},x_0)&=\int_0^\infty e^{-\rho t}\Big(\pi_2(1-x)-\varphi v_2^{1{.}5}\Big) \, dt, \\
\end{aligned}
\end{eqnarray}
both subject to:
\begin{eqnarray}\label{edo_lan}
\frac{dx}{dt}=(1-x)v_1-xv_2.
\end{eqnarray}
Lanchester type models have been employed, for example, in competitive advertising decisions, where the state variable $x\in[0,1]$ represents the share of the market of player 1 ($1-x$ than that of player 2) and $v_i\geq 0, \ i\in\{1,2\}$ are the amounts inverted in advertising, see \cite{georges}.

In Figure 8 we represent a numerical approximation to $W_1$ (left) and $\phi_1$ (right) for $\pi_1=\pi_2=\varphi=1$.
\begin{figure}\label{fig_8}
\begin{center}
\includegraphics[width=12cm]{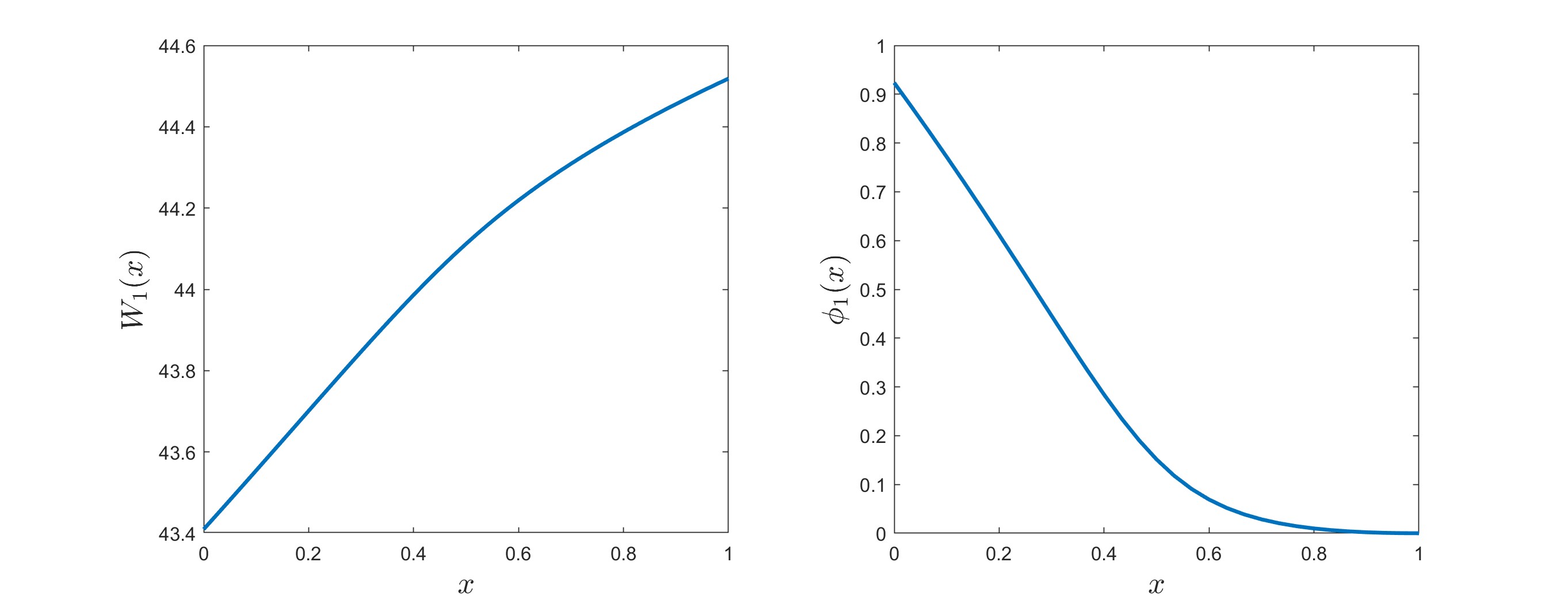}
\end{center}
\caption{Estimated Value Function $W_1(x)$ (left) and feedback strategy $\phi_1(x)$ (right) in the Lancaster model.}
\end{figure}
We proceed as in the previous numerical example. With respect to the temporal discretization, Figure 9  represents $W_1(\Psi_1^{\Delta h,\Delta k},\phi_{2}^{h,k},x_0)-W_1(\phi_1^{h,k},\phi_2^{h,k},x_0)$ for $h=1/2, 1/4, 1/8$ and $1/16$. As it can be seen, the best response $\Psi_1^{\Delta h,\Delta k}$ gives, as expected, a better result than playing $\phi_1^{h,k}$ for any value $h$, and this difference decreases as $h\rightarrow 0$.
\begin{figure}\label{fig_9}
\begin{center}
\includegraphics[width=10cm]{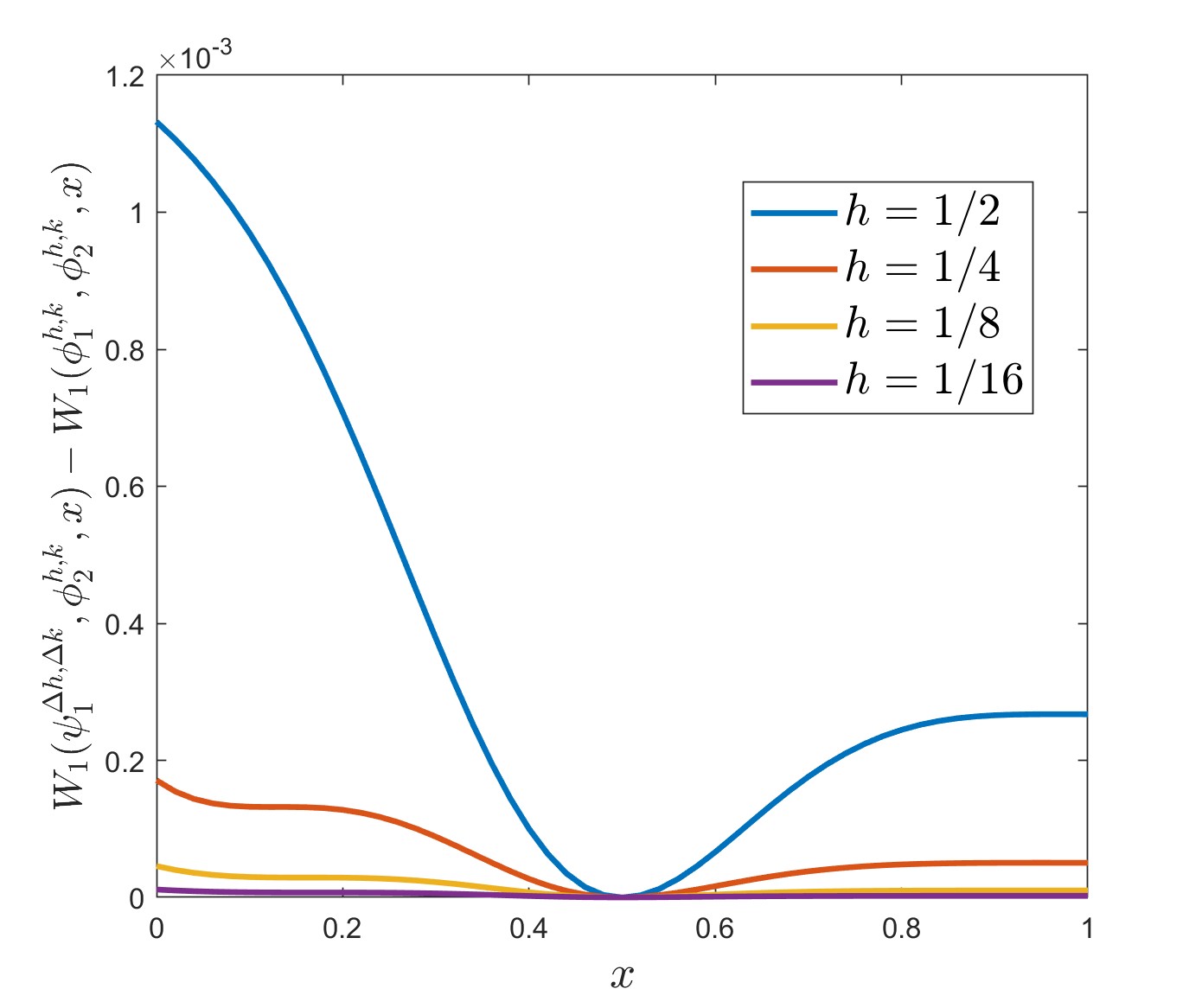}
\end{center}
\caption{$W_1(\Psi_1^{\Delta h,\Delta k},\phi_{2}^{h,k},x)-W_1(\phi_1^{h,k},\phi_2^{h,k},x)$ for $h=1/2, h=1/4, h=1/8$ and $h=1/16$}
\end{figure}

Figure 10 represents the error in the value function on the left and in the strategies on the right,
$\displaystyle\max_{x}\bigl\{W_1(\Psi_i^{\Delta h, \Delta k},\phi_{-i}^h,x)-W_1(\phi_1^h,\phi_2^h,x)\bigr\}$ and $\displaystyle\max_{x}\bigl\{\Psi_1^{\Delta h, \Delta k}- \phi_1^{h,k}\bigr\}$, for $h=1/2, 1/4, 1/8$ and $1/16$. We can observe the convergence both in the value function and in the strategies.
\begin{figure}\label{fig_10}
\begin{center}
\includegraphics[width=12cm]{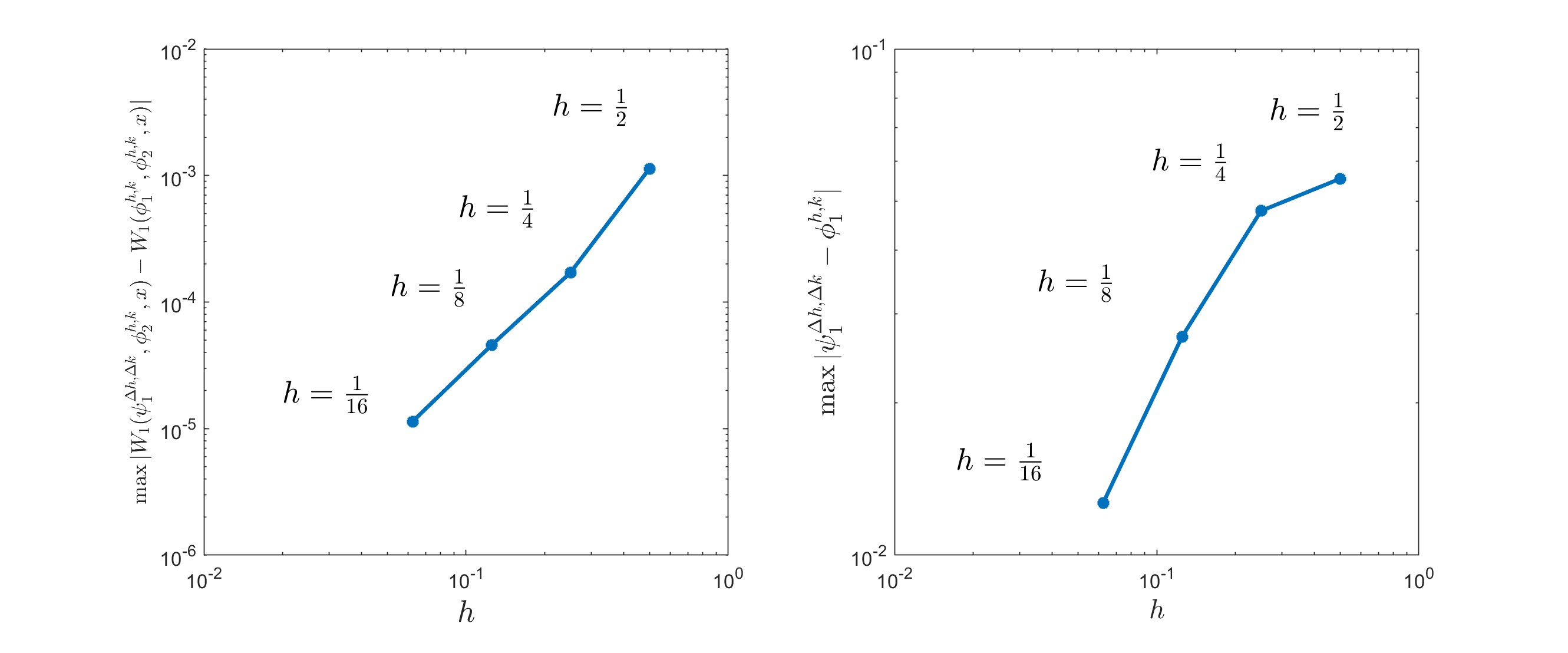}
\end{center}
\caption{$\displaystyle\max_{x}\bigl\{W_1(\Psi_1^{\Delta h,\Delta k},\phi_{2}^{h,k},x)-W_1(\phi_1^{h,k},\phi_2^{h,k},x)\bigr\}$ (left) and $\displaystyle \max_{x} | \Psi_1^{\Delta h,\Delta k}(x)-\phi_{1}^{h,k}(x)|$ for $h=1/2, h=1/4, h=1/8$ and $h=1/16$.}
\end{figure}

In Figure 11 we represent $W_1(\Psi_1^{\Delta h,\Delta k},\phi_{2}^{h,k},x)-W_1(\phi_1^{h,k},\phi_2^{h,k},x)$
for $k=1, k=1/2$ and $k=1/4$. The best response $\Psi_1^{\Delta h,\Delta k}$ gives a better result than playing $\phi_1^{h,k}$ for any value $k$, with the difference decreasing as $k\rightarrow 0$.
\begin{figure}\label{fig_11}
\begin{center}
\includegraphics[width=8cm]{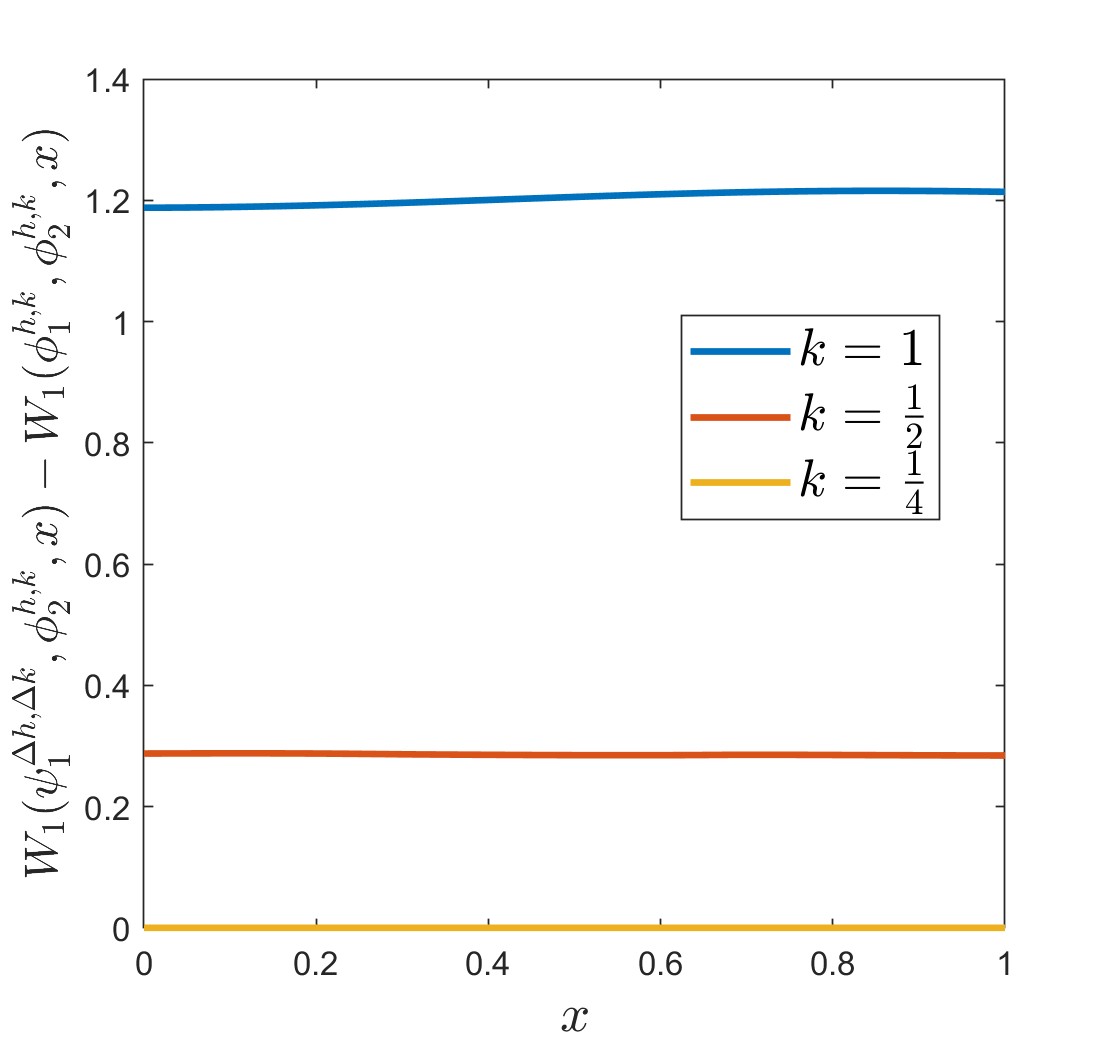}
\end{center}
\caption{$W_1(\Psi_1^{\Delta h,\Delta k},\phi_{2}^{h,k},x)-W_1(\phi_1^{h,k},\phi_2^{h,k},x)$ for $k=1, k=1/2$ and $k=1/4$.}
\end{figure}
Finally, Figure 12 represents $\displaystyle\max_{p_{01},p_{02}}\bigl\{W_1(\Psi_i^{\Delta h, \Delta k},\phi_{-i}^h,x)-W_1(\phi_1^h,\phi_2^h,x)\bigr\}$  (left) and $\displaystyle\max_{x}\bigl\{\Psi_1^{\Delta h, \Delta k}- \phi_1^{h,k}\bigr\}$ (right) for $k=1, k=1/2$ and $k=1/4$. Again, we obtain a very fast convergence towards 0 in the Value Function and first order of convergence in the strategies.
\begin{figure}\label{fig_12}
\begin{center}
\includegraphics[width=10cm]{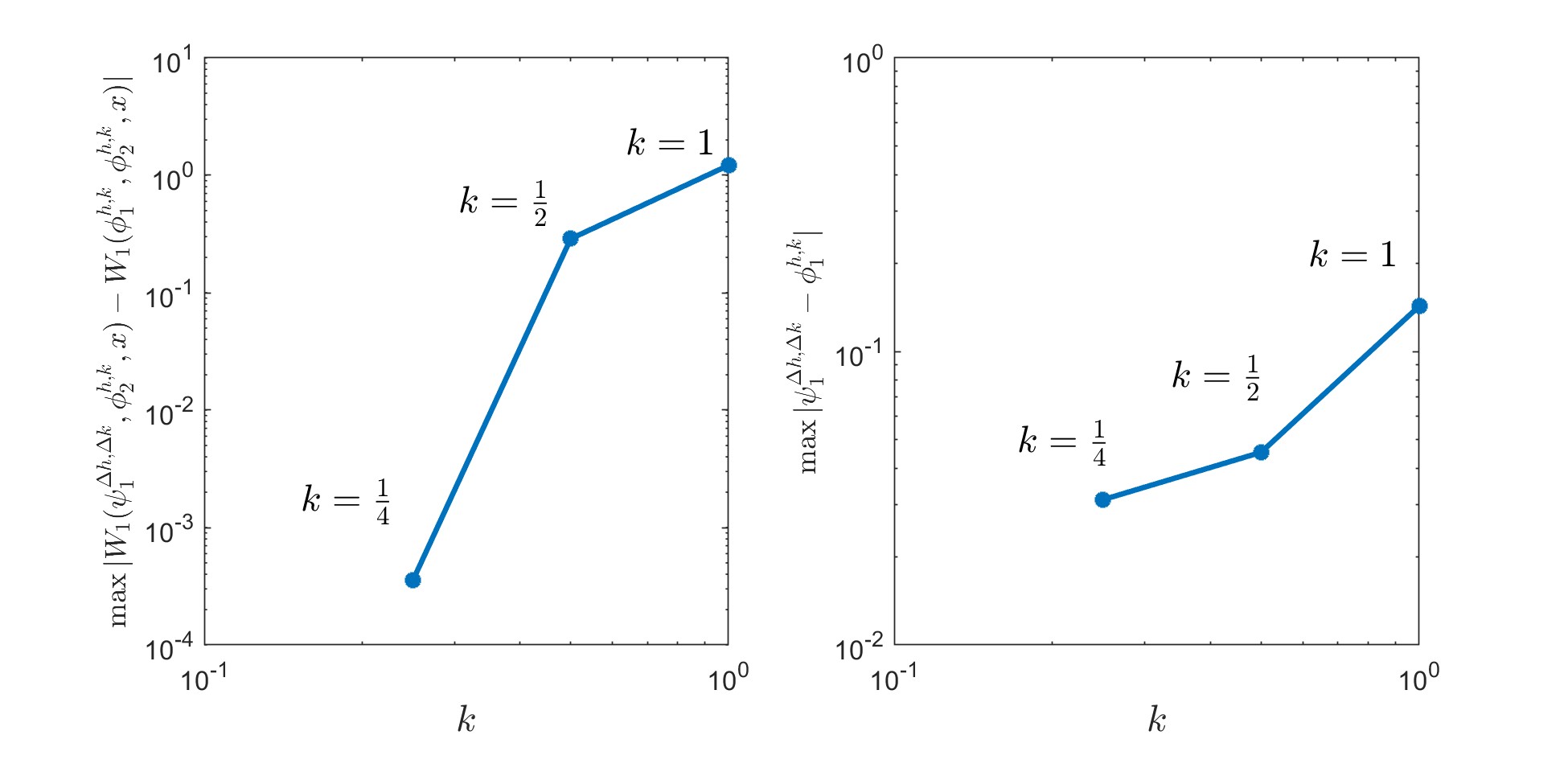}
\end{center}
\caption{$\displaystyle\max_{x}\bigl\{W_1(\Psi_1^{\Delta h,\Delta k},\phi_{2}^{h,k},x)-W_1(\phi_1^{h,k},\phi_2^{h,k},x)\bigr\}$ (left) and $\displaystyle \max_{x} | \Psi_1^{\Delta h,\Delta k}(x)-\phi_{1}^{h,k}(x)|$ for $k=1, k=1/2$ and $k=1/4$.}
\end{figure}
\section{Concluding remarks}

In this paper we analyze a semilagrangian approach to numerically approximate  Markovian Nash equilibria of differential games. We prove that markovian Nash equilibria of the discrete-time and fully discrete approximations, respectively, are $\epsilon$-Nash equilibria of the differential game with $\epsilon$ arbitrarily small for $h$ (the discretization time step) or $h$ and $k$ (the
discretization time step and the spatial mesh size) small enough. Under some restrictive hypotheses we prove that $\epsilon=\mathcal{O}(h)$
(respectively $\epsilon=\mathcal{O}(h+k))$. Although the hypotheses can be seen as too exigent, they often apply in the applications, particularly when a bounded domain, positively invariant for the flow of the dynamics, containing the region of interest can be identified, see \cite{DeFrutos2018} for an example.
\bigskip

\noindent {\bf Funding}  This research has been partially supported by projects   PID2022-136550NB-I00 funded by MICIU/AEI and FEDER (EU) (Javier de Frutos and Julia Novo) and TED2021-130390B-I00 funded by MICIU/AEI/10.13039/501100011033 and by European Union Next Generation EU/PRTR (Javier de Frutos and V\'{\i}ctor Gat\'{o}n)
and PID2024-155429NB-I00 funded by MICIU/AEI/10.13039/501100011033 and the European Union Next Generation EU/PRTR (V\'ictor Gat\'on).
\bigskip

\noindent {\bf Declarations}

{\bf Conflict of interest} The authors have no relevant financial or non-financial interests to disclose.
\bigskip

\end{document}